\newcommand{\bib}{\bibitem}
\def\d{\operatorname d}
\def\Z{\mathbb{Z}}
\def\sup{\operatorname{sup}}
\def\Tr{\operatorname{Tr}}
\def\sp{\operatorname{span}}
\def\fii{\varphi}
\def\g{\gamma}
\def\{{\left\lbrace}
\def\}{\right\rbrace}
\def\d{\delta}
\def\im{\operatorname{Im}}
\def\n{\mathcal N}
\def\lip{\operatorname{Lip}}
\def\a{\alpha}
\def\b{\beta}
\def\F{\mathcal F}
\def\L{ L}
\def\e{ \varepsilon}
\def\n{\|}
\def\ln{\left\Vert}
\def\rn{\right\Vert}
\def\({\big(}
\def\){\big)}
\def\({\bigl(}
\def\){\bigr)}
\newtheorem{definition}{Definition}
\newtheorem{theorem}{Theorem}
\newtheorem{lemma}[theorem]{Lemma}
\newtheorem{proposition}[theorem]{Proposition}
\newtheorem{corollary}[theorem]{Corollary}
\theoremstyle{remark}
\newtheorem*{remark*}{Remark}
\newcommand\absb[2]{\csname#1l\endcsname|#2\csname#1r\endcsname|}
\newcommand\normb[2]{\csname#1l\endcsname\|#2\csname#1r\endcsname\|}
\newcommand\N{\mathbb N}
\newcommand\R{\mathbb R}
\begin{document}

\title{Some Remarks on Schauder Bases in Lipschitz Free Spaces}

\author{Mat\v ej Novotn\'y}
\address{Department of Mathematics\\Faculty of Electrical Engineering\\
Czech Technical University in Prague\\ Jugosl\'{a}vsk\'{y}ch Partyz\'an\r{u} 1580/3, 160 00, Prague}

\address{Department of Industrial Informatics\\Czech Institute of Informatics, Robotics, and Cybernetics\\
Czech Technical University in Prague\\ Jugosl\'{a}vsk\'{y}ch Partyz\'an\r{u} 1580/3, 160 00, Prague}
\email{novotny@math.feld.cvut.cz}

\subjclass[2000]{46B03, 46B10.}
\keywords{Lipschitz-free space, Schauder basis, extension operator, unconditionality}
\thanks{The work was supported in part by GA\v CR 16-073785, RVO: 67985840 in part by grant SGS18/064/OHK4/1T/13 of CTU in Prague and in part by Ministry of Education, Youth and Sport of the Czech Republic within the project Cluster 4.0 number CZ.02.1.01/0.0/0.0/16\_ 026/0008432.}
\subjclass[2010]{46B20, 46T20}

\begin{abstract}
We show that the basis constant of every retractional Schauder basis on the Free space of a graph circle increases with the radius. As a consequence, there exists a uniformly discrete subset $M\subseteq\R^2$  such that $\F(M)$ does not have a retractional Schauder basis. Furthermore, we show that for any net $ N\subseteq\R^n$, $n\geq 2$, there is no retractional unconditional basis on the Free space $\F(N)$. 
\end{abstract}

\maketitle

\section{Introduction}
Let $(M,d)$ be a metric space with a distinguished point $0\in M$. Denote $\lip_0(M)$ the space of all Lipschitz functions $f:M\to\R$ with the property $f(0)=0$. Such a space can be equipped with the Lipschitz norm $\n f\n=\sup_{x\neq y}\frac{|f(x)-f(y)|}{d(x,y)}$, which turns it into a Banach space. We see that each point in $M$ can be naturally embedded into $\lip_0(M)^*$ via the Dirac mapping $\d$: $\d_x(f)=f(x)$, $f\in\lip_0(M)$, $x\in M$. The norm-closure of the subspace generated by functionals $\d_x$, $x\in M$, i.e.
$$\overline{\sp}^{\lip_0(M)^*}\{\d_x|x\in M\}$$
is the Lipschitz Free space over $M$, denoted $\F(M)$. Lipschitz Free spaces were introduced already by Arens and Eells in \cite{AE}, although the authors did not use the name Lipschitz Free spaces. Free spaces are called Arens-Eells spaces in \cite{W}, where a lot of results regarding the topic is presented.

Lipschitz Free spaces gained a lot of interest in last decades, connecting nonlinear theory with the linear one. Given two pointed metric spaces $M,N$, every Lipschitz mapping $\fii:M\to N$ which fixes the point $0$ extends to a bounded linear map $F:\F(M)\to\F(N)$, making the following diagram commute:
$$\begin{CD}
\F(M) @>{F}>> \F(N)\\
@A{\d_M}AA @AA{\d_N}A\\
M @>{\fii}>> N
\end{CD}$$
We focus on structural properties of Lipschitz Free spaces. It is well-known that $\lip_0(\R)=\L_\infty$, which yields $\F(\R)=\L_1$ isometrically and similarly $\F(\N)=\ell_1$. In \cite{CDW}, the authors prove that $\F(M)$ contains a complemented copy of $\ell_1(\N)$ if $M$ is infinite (has at least cardinality $\aleph_0$), which was further extended from $\N$ to all cardinalities in \cite{HN}. However, $\F(\R^2)$ cannot be embedded in $\F(\R)=L_1$ (see \cite{NS}).

Certain results were obtained concerning approximation properties in Free spaces, including \cite{PS},\cite{LP},\cite{HLP},\cite{K},\cite{Godefroy},\cite{GO} and of course \cite{GK}. However, not much is known yet about Schauder bases in Free spaces. H\'{a}jek and Perneck\'{a} \cite{HP} constructed a Schauder basis for the Free spaces $\F(\ell_1)$ and $\F(\R^n)$. From \cite{Kaufmann} we have $\F(M)$ is isomorphic to $\F(\R^n)$ for every $M$ with non-empty interior, which gives existence of Schauder basis on such $\F(M)$.

This article follows up the article \cite{HN}, where the authors proved existence (and in the case of $c_0$ constructively) of a Schauder basis on $\F(N)$, for any net $N$ in spaces $C(K)$ for $K$ metrizable compact (hence for $c_0$ and $\R^n$). In section \ref{nonexistence} we show that the same construction as in \cite{HN} cannot be used for constructing bases in $\F(N)$ for arbitrary uniformly discrete subset $N$. In section \ref{unconditionality} we prove that bases constructed in \cite{HN} are not unconditional and that for nets in $\R^n$, no Schauder basis on $\F(N)$ arising from the technique using retractions can be unconditional.
\section{Preliminaries}
As we mentioned, we are interested in constructing a Schauder basis on Lipschitz Free space. However, constructing such basis directly on the Free space is rather complicated, wherefore we prefer to work with its adjoint space and transfer the results to the Free space. The next theorem shows a way to construct a Schauder basis through operators on $\lip_0(M)$.
 
\begin{theorem} \label{operator} Let $M$ be a pointed metric space. Suppose there exists a sequence of linear operators $E_n:\lip_0(M)\to \lip_0(M)$, which satisfies the following conditions:
\begin{enumerate}
\item $\dim E_n \left(\lip_0(M)\right)=n$ for every $n\in\N$,
\item There exists $K>0$ such that $E_n$ is $K$-bounded for every $n\in\N$,
\item $E_m E_n=E_n E_m=E_n$ for every $m,n\in\N$, $n\leq m$,
\item \label{weak}For every $n$, the operator $E_n$ is continuous with respect to topology of pointwise convergence on $\lip_0(M)$,
\item \label{continuity} For every $f\in\lip_0(M)$ the function sequence $E_n f$ converges pointwise to $f$.
\end{enumerate}
Then the space $\F(M)$ has a Schauder basis with the basis constant at most $K$.
\end{theorem}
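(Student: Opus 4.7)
The plan is to dualize $E_n$ to bounded projections $P_n : \F(M) \to \F(M)$ of rank $n$ and extract a Schauder basis from their nested ranges. For each $y \in M$ and each $n$, the functional $\Psi_{n,y} : f \mapsto E_n f(y)$ on $\lip_0(M)$ is linear and, by condition (4), continuous in the topology of pointwise convergence. Continuity at $0$ supplies finitely many points $x_1,\dots,x_k \in M$ with the following property: whenever $f(x_i)=0$ for all $i$, then $\Psi_{n,y}(f)=0$ (by homogeneity, since $cf$ then lies in the witnessing neighborhood for every scalar $c$). Hence $\Psi_{n,y} \in \sp\{\delta_{x_1},\dots,\delta_{x_k}\} \subset \F(M)$. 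Setting $P_n \delta_y := \Psi_{n,y}$ and extending linearly gives an operator on $\sp\{\delta_y : y\in M\}$ satisfying $\langle P_n\mu, f\rangle = \langle \mu, E_n f\rangle$; combined with condition (2) this yields $\|P_n\mu\| \leq K\|\mu\|$, so $P_n$ extends to a bounded operator on $\F(M)$ with $\|P_n\| \leq K$ and $E_n = P_n^*$. Dualizing condition (3) gives $P_n P_m = P_m P_n = P_n$ for $n \leq m$, so each $P_n$ is an idempotent, the ranges $P_n(\F(M))$ are nested, and $\dim P_n(\F(M)) = \dim E_n(\lip_0(M)) = n$ by condition (1).

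Now pick $e_1 \in P_1(\F(M)) \setminus \{0\}$ and inductively $e_n \in P_n(\F(M)) \cap \ker P_{n-1}$, which is nonzero since $P_{n-1}$ has corank $1$ in $P_n(\F(M))$. The projection identities force $P_m e_n = e_n$ for $m \geq n$ and $P_m e_n = P_m P_{n-1} e_n = 0$ for $m < n$; in particular $e_1,\dots,e_n$ is a basis of $P_n(\F(M))$, and each $P_n y$ expands as $\sum_{k=1}^n c_k(y) e_k$ with coordinates independent of $n$. Finally I would show $P_n y \to y$ in norm for every $y \in \F(M)$. Since $\|P_n\| \leq K$, the set of such $y$ is closed and trivially contains $\bigcup_n P_n(\F(M))$. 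If this union were not dense, Hahn--Banach would furnish a nonzero $f \in \lip_0(M)$ with $\langle y, E_n f\rangle = \langle P_n y, f\rangle = 0$ for all $y$ and $n$, so $E_n f = 0$ for every $n$, contradicting condition (5) which forces $f = \lim E_n f = 0$ pointwise. Hence $(e_n)$ is a Schauder basis for $\F(M)$ with partial-sum projections $P_n$ and basis constant at most $K$.

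The main obstacle is the first step, namely passing from $E_n$ to $P_n$. The delicate point is that pointwise convergence in $\lip_0(M)$ is strictly stronger than weak-$^*$ convergence (they agree only on bounded sets, since $\F(M)$ is spanned by Dirac functionals), so one cannot simply invoke a ready-made ``weak-$^*$ continuous implies predual of an operator'' theorem. The factorization argument above sidesteps this by constructing $P_n$ directly on the dense span of Dirac functionals and using the uniform bound from condition (2) to extend by continuity, bypassing any verification of weak-$^*$ continuity of $E_n$ on unbounded sets.
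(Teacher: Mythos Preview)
Your proof is correct and broadly parallel to the paper's, but it differs in two places worth noting. For the existence of the predual operators $P_n$, the paper simply asserts that pointwise continuity equals $w^*$-continuity on bounded sets and then invokes the general principle that a bounded operator which is $w^*$-to-$w^*$ continuous on bounded sets is an adjoint; this is true but implicitly relies on a Banach--Dieudonn\'e/Krein--\v{S}mulian type argument. Your construction is more elementary and self-contained: you show directly that each functional $f\mapsto E_nf(y)$ lies in a finite span of Diracs, define $P_n$ on $\sp\{\delta_y\}$, and extend by the uniform bound. For norm convergence $P_n\mu\to\mu$, the paper first obtains weak convergence from condition~(5), then runs a contradiction argument: if norm convergence failed for some $\mu$, it shows $\mu\notin\overline{\bigcup_n P_n(\F(M))}$ and separates by Hahn--Banach, contradicting the weak convergence. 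You instead prove density of $\bigcup_n P_n(\F(M))$ directly (Hahn--Banach plus $E_nf\to f$ pointwise) and combine it with the closedness of $\{y:P_ny\to y\}$ under the uniform bound. Both routes are valid; yours is a bit cleaner and avoids the detour through weak convergence, while the paper's version makes the role of condition~(5) as ``$w^*$-approximation of the identity'' more visible.
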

\begin{proof}
Note first that the topology of pointwise convergence coincides with the $w^*$-topology on bounded subsets of $\lip_0(M)$. Therefore, from the condition (\ref{weak}), the operators $E_n$ are $w^*$ to $w^*$ continuous on bounded subsets of $\lip_0(M)$ and hence there exist linear operators $P_n:\F(M)\to\F(M)$ such that $P^*_n=E_n$ for every $n\in\N$. It is now clear that $\n P_n\n\leq K$, $\dim P_n \left(\F(M)\right)=n$ and that $P_m P_n=P_n P_m=P_n$ for every $m,n\in\N$, $n\leq m$. Furthermore (\ref{continuity}) together with the fact that the topology of pointwise convergence coincides with the $w^*$-topology on bounded subsets of $\lip_0(M)$ means, that for every $f\in \lip_0(M)$ the sequence $E_n f$ converges $w^*$ to $f$, and that for every $\mu\in\F(M)$ the sequence $P_n\mu$ converges weakly to $\mu$. But that means $\n P_n\mu-\mu\n\to 0$ for every $\mu\in\F(M)$. Indeed, if there were $\mu\in\F(M)$, $c>0$ and a subsequence $P_{n_k}$, such that $\n P_{n_k}\mu-\mu\n>c$ for all $k\in\N$, then for every $n\geq n_1$, there exists a $k\in\N$ such that $n\leq n_k$, which yields

$$c<\n P_{n_k}\mu-\mu\n\leq\n P_{n_k}\mu-P_n\mu\n + \n P_{n}\mu-\mu\n\leq (K+1)\n P_n\mu-\mu\n.$$

From $P_1(\F(M))\subseteq P_2(\F(M))\subseteq P_3(\F(M))\subseteq...$ we get $E=\bigcup_{n=1}^{\infty}P_n(\F(M))$ is a convex set and as all $P_n$ are commuting projections, we have that $\mu\notin\overline E$. Indeed, if $\mu \in \overline E$, then there is a sequence $\{x_k\}_{k=1}^\infty\subseteq E$, such that $x_k\to \mu$. If we choose an increasing sequence of numbers $l_k\in\N$, $l_k>n_1$, which satisfy $P_{l_k}x_k=x_k$, we get that
$$\n P_{l_k}x_k-\mu\n\geq \n P_{l_k}\mu-\mu\n-\n P_{l_k}\mu-P_{l_k}x_k\n\geq \frac{c}{K+1}-K\n \mu-x_k\n.$$
Limiting $k\to\infty$ yields $0\geq \frac{c}{K+1}$, which is a contradiction. Therefore $\mu\notin\overline E$. Hence Hahn-Banach theorem gives us the existence of a linear functional $f\in\lip_0(M)$, $\n f\n=1$ with $f|_E=0$ and $f(\mu)>0$. But that is a contradiction as $P_n\mu\overset{w}\to\mu$. Therefore $P_n \mu\to \mu$.
\end{proof}

The following corollary appears already in \cite{HN}, p.12. It gives us a way to construct the Schauder basis on $\F(M)$ only by using the metric space $M$.

\begin{corollary}\label{one} Let $M$ be a metric space with a distinguished point $0$. Suppose there exists a sequence of distinct points $\{\mu_n\}_{n=0}^{\infty}\subseteq M$, $\mu_0=0$, together with a sequence of retractions
$\{\fii_n\}_{n=0}^\infty$, $\fii_n:M\to M$, $n\in\N_0$ which satisfy the following conditions:
\begin{enumerate}[(i)]
\item $\fii_n(M)=\{\mu_j\}_{j=0}^{n}$ for every $n\in\N_0$,\label{bed1}
\item $\overline{\bigcup_{j=0}^{\infty}\{\mu_j\}}=M$, \label{bed2}
\item There exists $K>0$ such that $\fii_n$ is $K$-Lipschitz for every $n\in\N_0$,\label{bed3}
\item $\fii_m\fii_n=\fii_n\fii_m=\fii_n$ for every $m,n\in\N_0$, $n\leq m$. \label{bed4}
\end{enumerate}
Then the space $\F(M)$ has a Schauder basis with the basis constant at most $K$.
\end{corollary}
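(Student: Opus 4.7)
The plan is to satisfy the hypotheses of Theorem \ref{operator} via the composition operators associated to the retractions $\fii_n$. For each $n\in\N$ I would set $E_n f := f\circ\fii_n$ for $f\in\lip_0(M)$. Since $\fii_n$ is a retraction onto $\{\mu_0,\dots,\mu_n\}$ with $\mu_0=0$, it fixes $0$, so $E_n f\in\lip_0(M)$; the bound (\ref{bed3}) then gives $\|E_n f\|\le K\|f\|$, which is property (2) of Theorem \ref{operator}.

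Properties (1), (3), and (4) are essentially immediate. The operator $E_n$ depends on $f$ only through the restriction $f|_{\fii_n(M)}$, and since arbitrary values on $\mu_1,\dots,\mu_n$ (with $f(\mu_0)=0$ forced) are attainable by a McShane extension, $E_n\bigl(\lip_0(M)\bigr)$ has dimension $n$, which is (1). Property (3) is a direct consequence of (\ref{bed4}): for $n\le m$,
\[
E_m E_n f = f\circ\fii_n\circ\fii_m = f\circ\fii_n = E_n f, \qquad E_n E_m f = f\circ\fii_m\circ\fii_n = f\circ\fii_n = E_n f.
\]
For (4), pointwise convergence $f_\alpha\to f$ trivially implies $f_\alpha(\fii_n(x))\to f(\fii_n(x))$ for every $x\in M$, so $E_n f_\alpha\to E_n f$ pointwise.

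The only step with genuine content is (5), i.e.\ $E_n f\to f$ pointwise. I would prove directly that $\fii_n(x)\to x$ in $M$ for each $x\in M$, after which continuity of $f$ (being Lipschitz) yields $(E_n f)(x)=f(\fii_n(x))\to f(x)$. Given $\ve>0$, use the density assumption (\ref{bed2}) to pick $\mu_k$ with $d(x,\mu_k)<\ve$. For $n\ge k$, the retraction property gives $\fii_n(\mu_k)=\mu_k$, so by (\ref{bed3}),
\[
d(\fii_n(x),x)\le d(\fii_n(x),\fii_n(\mu_k))+d(\mu_k,x)\le (K+1)\,d(x,\mu_k)<(K+1)\ve.
\]
With all five hypotheses verified, Theorem \ref{operator} produces a Schauder basis of $\F(M)$ with basis constant at most $K$. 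There is no serious obstacle here: the density condition (\ref{bed2}) combined with the uniform Lipschitz bound (\ref{bed3}) does all the work in the final step, and the rest is a routine transfer from retractions on $M$ to bounded projections on $\lip_0(M)$.
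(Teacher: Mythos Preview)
Your proposal is correct and follows exactly the approach of the paper: define $E_n f = f\circ\fii_n$ and verify the five hypotheses of Theorem~\ref{operator}. The paper's own proof merely asserts that this verification is ``not difficult'', so your write-up is in fact a fuller version of the same argument, with the density--plus--uniform-Lipschitz step for pointwise convergence spelled out.
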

\begin{proof}
It is not difficult to see that for each $n\in\N$ the formula $E_nf=f\circ\fii_n$, $f\in\lip_0(M)$ defines a linear 
operator $E_n:\lip_0(M)\to\lip_0(M)$, such that the sequence $E_n$ satisfies the assumptions of Theorem \ref{operator}.
\end{proof}

The last two theorems lead us to the following definition.

\begin{definition} Let $M$ be an infinite metric space such that $\F(M)$ has a Schauder basis $E$ with projections $P_n$, $n\in\N$. We say $E$ is an extensional Schauder basis if there exist finite sets $\{0\}=M_0\subseteq M_1\subseteq M_2\subseteq...$ such that $\bigcup_{n=1}^\infty M_n$ is dense in $M$ and we have that for every $n\in\N$ the adjoint $P_n^*$ is a linear extension operator $P_n^*:\lip_0(M_n)\to\lip_0(M)$ with $P_n^*f|_{M_n}=f$ (or equivalently $P_n$ is a projection onto $\F(M_n)$). We say $E$ is a retractional Schauder basis, if there exist retractions $\{\fii_n\}_{n=0}^\infty$, $\fii_n:M\to M$ which satisfy the conditions of Corollary \ref{one} and such that they give rise to the basis $E$, i.e. the adjoints $P_n^*$ satisfy $P_n^*f=f\circ\fii_n$, $f\in\lip_0(M)$.
\end{definition}
It is clear that in the definition we actually have $|M_n\setminus M_{n-1}|=1$ for every $n\in\N$. Note also that every retractional Schauder basis is a special case of an extensional Schauder basis. The next lemma shows in more detail what form the basis vectors take.
\begin{lemma}\label{char}
Let $M$ be a metric space such that there is a sequence of distinct points $0=\mu_0,\mu_1,\mu_2,...\in M$ such that $\bigcup_{n=1}^\infty \{\mu_0,\mu_1,...,\mu_n\}$ is dense in $M$. For every $n\in\N_0$ denote $M_n=\{\mu_0,...,\mu_n\}$. Suppose $\F(M)$ has a Schauder basis $B=\{e_n\}_{n=1}^\infty$. Then the following are equivalent:
\begin{enumerate}
\item\label{extb} $B$ is an extensional Schauder basis with extension operators $E_n:\lip_0(M_n)\to\lip_0(M)$.
\item\label{basf} For every $n\in\N$, there are constants $0\neq c_n,a^n_i\in\R$, $i\in\{1,...,n-1\}$ such that we have $c_n e_n=\d_{\mu_n}-\sum_{i=1}^{n-1}a_i^{n}\d_{\mu_i}$.
\end{enumerate}
\end{lemma}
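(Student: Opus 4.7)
The plan is to leverage that, for any Schauder basis $B$ with projections $P_n$, the ranges $P_n(\F(M))$ form an increasing chain with $\dim P_n(\F(M)) = n$, while $\F(M_n) = \sp\{\d_{\mu_1}, \ldots, \d_{\mu_n}\}$ is also an $n$-dimensional subspace of $\F(M)$. Being extensional with respect to the chain $M_0 \subseteq M_1 \subseteq \ldots$ from the hypothesis means exactly that these two chains of subspaces coincide. For (\ref{extb}) $\Rightarrow$ (\ref{basf}): since $B$ is extensional, $P_n(\F(M)) = \F(M_n)$ and $P_{n-1}(\F(M)) = \F(M_{n-1})$, so $e_n \in \F(M_n) \setminus \F(M_{n-1})$. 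Expanding $e_n = \sum_{i=1}^n b_i^n \d_{\mu_i}$ in the natural basis of $\F(M_n)$, the condition $e_n \notin \F(M_{n-1})$ forces $b_n^n \neq 0$, and setting $c_n = 1/b_n^n$, $a_i^n = -b_i^n/b_n^n$ yields the claimed formula.

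For (\ref{basf}) $\Rightarrow$ (\ref{extb}): by induction on $n$ the recursion $\d_{\mu_n} = c_n e_n + \sum_{i=1}^{n-1} a_i^n \d_{\mu_i}$ shows simultaneously that $\d_{\mu_n} \in \sp\{e_1, \ldots, e_n\}$ and $e_n \in \sp\{\d_{\mu_1}, \ldots, \d_{\mu_n}\}$, hence $\sp\{e_1,\ldots,e_n\} = \sp\{\d_{\mu_1},\ldots,\d_{\mu_n}\} = \F(M_n)$. Therefore $P_n(\F(M)) = \F(M_n)$, which by duality (so that $(P_n^* f)(\mu_k) = \langle f, P_n \d_{\mu_k} \rangle = f(\mu_k)$ for $\mu_k \in M_n$) says precisely that $P_n^*$ factors as an extension operator $\lip_0(M_n) \to \lip_0(M)$ in the sense of the definition.

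The only mildly subtle point is verifying that $\{\d_{\mu_1}, \ldots, \d_{\mu_n}\}$ is indeed a Hamel basis of the $n$-dimensional space $\F(M_n)$: spanning uses $\d_{\mu_0} = 0$, and linear independence follows from separating the $\mu_i$ by Lipschitz bump functions of small enough support. Beyond this, both implications reduce to linear-algebraic bookkeeping with the triangular change of basis between $\{e_k\}$ and $\{\d_{\mu_k}\}$ inside each $\F(M_n)$, so I do not anticipate a serious obstacle in carrying out the argument.
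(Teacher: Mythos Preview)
Your proposal is correct and follows essentially the same approach as the paper: both directions boil down to the observation that being extensional with respect to the chain $M_n$ is, by definition, equivalent to $\im P_n=\F(M_n)=\sp\{\d_{\mu_1},\dots,\d_{\mu_n}\}$, and then the triangular change of basis between $\{e_k\}$ and $\{\d_{\mu_k}\}$ handles both implications. The paper's argument is simply a terser version of yours; your added remarks on linear independence of the $\d_{\mu_i}$ and the duality computation for $P_n^*$ merely spell out details the paper leaves implicit.
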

\begin{proof}
$(\ref{basf})\Rightarrow(\ref{extb})$. Note first that for every $n\in\N$, we have $e_n\in \im P_n\cap\ker P_{n-1}$. From that it follows inductively for every $n\in\N$ that $\im P_n=\sp\{\d_{\mu_1},...,\d_{\mu_n}\}$.

$(\ref{extb})\Rightarrow(\ref{basf})$ The fact that $E_n=P^*_n$ is a bounded linear extension from $M_n$ to $M$ implies that each $P_n$ maps $\F(M)$ onto $\F(M_n)$, which means each basis vector $e_n$ is a linear combination of Dirac functionals at the points of $M_n$, such that the coefficients at $\d_{\mu_n}$ do not vanish.
\end{proof}
Keeping the notation from previous lemma, we see that for each $n\in\N$ we may define a finite dimensional operator $R_{n}:\lip_0(M_{n-1})\to\lip_0(M_{n})$ via
$$R_nf(\mu_{j})=\begin{cases}
f(\mu_j) & j\in\{0,...,n-1\}\,,\\
\sum_{i=1}^{n-1}a_i^{n}f(\mu_{i}) & j=n\,.\\
\end{cases}$$
The operator $E_n=P^*_n$ can be then reconstructed through a $w^*$-limit of operator composition $\lim_k R_kR_{k-1}...R_{n+1}$. The constants $c_n$ were in the lemma only for scaling of the basis vectors $e_n$.

In case of a retractional basis, the basis vectors take form of two-point molecules: For every $n\in\N$ and $i\in\{1,...,n-1\}$ exactly one of the coefficients $a_i^n$ is non-zero, namely has the value $1$. If for example $a_j^n=1$, then $\fii_j(\mu_n)=\mu_j$, which means $e_n=\d_{\mu_n}-\d_{\mu_{j}}$.

Throughout this article, given a metric space $M$, $d$ will denote its metric. If $M$ is a countable (even finite) uniformly discrete metric space with $\F(M)$ having a retractional Schauder basis, by symbols $\mu_0,\mu_1,\mu_2...$, resp. $\fii_0,\fii_1,\fii_2,...$ we will always mean points $\mu_i\in M$, resp. retractions $\fii_i:M\to M$ which satisfy Corollary \ref{one}. Obviously the finite analogues of Corollary \ref{one} and Theorem \ref{operator} also hold. We are going to look in more detail on some properties of retractional Schauder basis.

Following the notation of Corollary \ref{one} (or the proof of Lemma 14 in \cite{HN}) we find useful to denote the set-valued functions $F_i=\fii_i^{-1}:M\to 2^M$, $F_i(x)=\{y|\ \fii_i(y)=x\}$, $i\in\N_0$. Clearly $F_0(0)=M$. From the commutativity of the $\fii_i$'s further follows that for any $i<j$ we have
$$F_i(\mu_i)\cap F_j(\mu_j)\in\{\emptyset,F_j(\mu_j)\}.$$
\begin{definition} Let $M$ and $\mu_i$, $\fii_i$, $i\in\N_0$ satisfy the assumptions from Corollary \ref{one} and $M=\{\mu_i\}_{i=0}^\infty$. A finite or infinite sequence of points $(\mu_{k_1},\mu_{k_2},\mu_{k_3},...)$ is called a chain whenever $k_1<k_2<k_3<...$ and $\fii_{k_i-1}(\mu_{k_i})=\mu_{k_{i-1}}$ for every $i\in\{2,3,...\}$.
\end{definition}
Note that for every chain $(\mu_{k_1},\mu_{k_2},\mu_{k_3}...)$ we have $F_{k_1}(\mu_{k_1})\supseteq F_{k_2}(\mu_{k_2})\supseteq F_{k_3}(\mu_{k_3})\supseteq ...$. We can also introduce partial order on $M$ by $\mu_i\prec\mu_j$ if and only if there exist $n\in\N_0$ points $\mu_{k_1},...,\mu_{k_n}\in M$ such that $(\mu_i,\mu_{k_1},...,\mu_{k_n},\mu_j)$ is a chain. Note also that for two chains $S,T$ the difference $S\setminus T$ and intersection $S\cap T$ are also chains, if nonempty. For a finite chain $(x_1,x_2,...,x_n)$ we call the point $x_1$ its initial point and $x_n$ its final point.

Every chain can be viewed as a path or its segment from $0\in M$ to a given point $x\in M$. Indeed, for every $x\in M$ there exists $n\in\N$ such that for every $i\geq n$ one has $\fii_i(x)=x$. Assuming $n$ is the least number with that property we can define the set $T^{x}_0=\bigcup_{i=0}^{n}\{\fii_i(x)\}$ which contains exactly the points of the chain with initial point $0$ and final point $x$. Regarding $T_0^x$ as an ordered set (the order $\prec$ is linear on $T_0^x$), it is clear that given $x\in M$, there exists exactly one chain $T_0^x$ from $0$ to $x$.

Note also that for every chain $(\mu_{n_1},...,\mu_{n_k})$, $k\geq 2$ there exist constants $c_{n_i}$ (the constants from Lemma \ref{char}) such that for basis vectors $e_{n_1},...,e_{n_k}$ we have
$$\sum_{i=2}^kc_{n_i}e_{n_i}=\d_{\mu_{n_k}}-\d_{\mu_{n_1}}.$$
The following lemma says that, if the space is not too "porous", basis vectors can be made only of two-point molecules in points which are not too far from each other.

\begin{lemma}[Step lemma]\label{step} Let $M$ be a countable metric space, $\a>0, K\geq 1$ and $\fii_n:M\to M$ a system of retractions from Corollary \ref{one}. If $(\mu_{i_1},...,\mu_{i_j})$, $j>1$ is a chain and there exist distinct points $x_1,...,x_k\in M$ with $d(x_l,x_{l+1})\leq\a$, $l\in\{1,...,k-1\}$, $x_1=\mu_{i_j}$, $x_k=\mu_{i_1}$ and $\sup_{i_1\leq n\leq i_j}\lip\fii_n\leq K$, then $d(\mu_{i_{m-1}},\mu_{i_{m}})\leq 2K\a$ for all $m\in\{2,...,j\}$.
\end{lemma}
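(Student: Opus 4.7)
The plan is to fix an index $m \in \{2, \ldots, j\}$ and apply both retractions $\fii_{i_m}$ and $\fii_{i_m - 1}$ to the path $x_1, \ldots, x_k$, isolating a single ``jump edge'' along which $\fii_{i_m}$ first abandons the value $\mu_{i_m}$. The required bound will then follow from one triangle inequality through the image of this edge.

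First I would identify the endpoints of the path under $\fii_{i_m}$. The chain relation $\fii_{i_p - 1}(\mu_{i_p}) = \mu_{i_{p-1}}$, together with the commutativity $\fii_{i_m} = \fii_{i_m}\fii_{i_p - 1}$ (valid whenever $p > m$, so $i_m \leq i_p - 1$), yields by reverse induction on $p$ that $\fii_{i_m}(\mu_{i_p}) = \mu_{i_m}$ for every $p \geq m$; in particular $\fii_{i_m}(x_1) = \fii_{i_m}(\mu_{i_j}) = \mu_{i_m}$. Also $\fii_{i_m}(x_k) = \fii_{i_m}(\mu_{i_1}) = \mu_{i_1}$ since $i_1 < i_m$. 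Setting $z_l := \fii_{i_m}(x_l)$ and letting $l$ be the largest index with $z_l = \mu_{i_m}$, we conclude $l < k$ (because $\mu_{i_1} \neq \mu_{i_m}$) and $z_{l+1} \in \{\mu_0, \ldots, \mu_{i_m}\} \setminus \{\mu_{i_m}\} = \{\mu_0, \ldots, \mu_{i_m - 1}\}$.

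Both required distances now come from $K$-Lipschitzness applied to the single edge $(x_l, x_{l+1})$. Directly, $d(\mu_{i_m}, z_{l+1}) = d(\fii_{i_m}(x_l), \fii_{i_m}(x_{l+1})) \leq K\alpha$. For the second bound, use $\fii_{i_m - 1} = \fii_{i_m - 1}\fii_{i_m}$: on one hand $\fii_{i_m - 1}(x_l) = \fii_{i_m - 1}(\mu_{i_m}) = \mu_{i_{m-1}}$ by the chain property, and on the other $\fii_{i_m - 1}(x_{l+1}) = \fii_{i_m - 1}(z_{l+1}) = z_{l+1}$ since $z_{l+1}$ lies in the fixed set $\{\mu_0, \ldots, \mu_{i_m - 1}\}$ of $\fii_{i_m - 1}$. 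Hence $d(\mu_{i_{m-1}}, z_{l+1}) \leq K\alpha$, and the triangle inequality gives $d(\mu_{i_{m-1}}, \mu_{i_m}) \leq 2K\alpha$.

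The conceptually trickiest step, and the main obstacle, is to avoid the naive approach of iterating $\fii_{i_m}$ along the entire path (which would only produce a bound of order $K^2\alpha$ or worse); instead one must work edge-by-edge and extract a single jump of $\fii_{i_m}$. Once that jump edge is isolated, simultaneously evaluating $\fii_{i_m - 1}$ on its two endpoints reduces everything to one triangle inequality.
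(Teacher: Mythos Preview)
Your proof is correct and uses the same key mechanism as the paper: the commutativity identity $\fii_{i_m-1}=\fii_{i_m-1}\fii_{i_m}$, which forces $\fii_{i_m-1}(x_{l+1})=\fii_{i_m}(x_{l+1})$ at the first edge where $\fii_{i_m}$ leaves $\mu_{i_m}$, so that this common value $z_{l+1}$ lies within $K\alpha$ of both $\mu_{i_m}$ and $\mu_{i_{m-1}}$. The only organisational difference is that the paper phrases this as a proof by contradiction (assuming $d(\mu_{i_{m-1}},\mu_{i_m})>2K\alpha$ and then propagating $\fii_{i_m}(x_l)=\mu_{i_m}$, $\fii_{i_m-1}(x_l)=\mu_{i_{m-1}}$ along the whole path by induction until it collides with $x_k=\mu_{i_1}$), whereas you argue directly by isolating the jump edge; the underlying observation is identical.
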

\begin{proof}
Suppose $d(\mu_{i_{m-1}},\mu_{i_{m}})> 2K\a$ for some $m\in\{2,...,j\}$. We know $\fii_{i_m}(\mu_{i_j})=\mu_{i_m}$ and $\fii_{i_m-1}(\mu_{i_j})=\mu_{i_{m-1}}$. We prove by induction for all $l\in\{1,...,k\}$ that $\fii_{i_m}(x_l)=\mu_{i_m}$ and $\fii_{i_m-1}(x_l)=\mu_{i_{m-1}}$, which is a contradiction as $x_k=\mu_{i_1}$ and $\fii_{i_m}(\mu_{i_1})=\mu_{i_1}\neq \mu_{i_m}$. For $l=1$ we have $x_l=\mu_{i_j}$ and the statement clearly holds. Suppose it holds for all $l=1,...,s-1<k$. From $d(x_{s-1},x_{s})\leq\a$ it follows that $d(\mu_{i_m},\fii_{i_m}(x_{s}))\leq K\a$ and $d(\mu_{i_{m-1}},\fii_{i_{m}-1}(x_{s}))\leq K\a$. From commutativity of all $\fii_n$'s follows that either $\fii_{i_{m}-1}(x_s)=\fii_{i_m}(x_s)\notin\{\mu_{i_m}\}$ holds or $\fii_{i_m}(x_s)=\mu_{i_m}$ and $\fii_{i_m-1}(x_s)=\mu_{i_{m-1}}$ is true. Since $B_{K\a}(\mu_{i_m})\cap B_{K\a}(\mu_{i_{m-1}})=\emptyset$ we conclude the latter is true, which completes the induction step and the contradiction is obtained.
\end{proof}

In the following section, we are going to prove that there are spaces $\F(M)$ with no retractional Schauder basis yet having Schauder basis, moreover extensional.

\section{Nonexistence of retractional Schauder bases}\label{nonexistence}

\begin{definition} Let $x_0,x_1,...,x_n$, $n\in\N$ be distinct points. The set $C^0_n=\{x_0,x_1,x_2...,x_n\}$ with the (standard graph) metric $d(x_k,x_0)=n$, $k\neq 0$, $d(x_k,x_l)=\min\{|k-l|,n-|k-l|\}$, $k,l>0$ we call a circle or a circle of radius $n$ with centre $x_0$.
\end{definition}
In the following, we regard the centre $x_0$ as the base point in the pointed metric space $(C^0_n,d,x_0)$ and denote it $0$.

We are also going to use an uncentered circle, i.e. a subgraph $C_n=\{x_1,x_2,...,x_n\}$ with the induced metric. On $C_n$, we define orientation: We say point $x_l$ lies to the left of the point $x_k$, $k,l\in\{1,...,n\}$, if one of these situations happens:
\begin{enumerate}
\item $k>\frac{n-1}{2}$ and $l\in\{k,k-1,...,k-\lfloor\frac{n+1}{2}\rfloor+1\}$,
\item $k\leq \frac{n-1}{2}$ and $l\in\{k,k-1,...,1\}\cup\{n,n-1,...,n-\lfloor\frac{n+1}{2}\rfloor+k+1\}$.
\end{enumerate}
Analogously, we say $x_l$ lies to the right of $x_k$ if one of the following conditions is satisfied:
\begin{enumerate}
\item $k\leq\frac{n-1}{2}$ and $l\in\{k,k+1,...,k+\lfloor\frac{n+1}{2}\rfloor\}$,
\item $k>\frac{n-1}{2}$ and $l\in\{k,k+1,...,n\}\cup\{1,2,...,\lfloor\frac{n+1}{2}\rfloor-(n-k+1)\}$.
\end{enumerate}
We show that every retractional Schauder basis on $\F(C_n^0)$ has a basis constant which is increasing with $n$.

\begin{theorem}\label{circle}
Let $n\in\N$, $n\geq 10$ and let $\{\fii_i\}_{i=0}^n$ be a system of retractions on a circle $C^0_n$ satisfying the conditions of Corollary \ref{one} . Then there is an $s\in\{1,...,n\}$ such that $$\lip\fii_s\geq\frac{\sqrt{8n+1}-1}{8}.$$
\end{theorem}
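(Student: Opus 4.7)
The plan is to argue by contradiction: assume $\lip\fii_s \leq K$ for every $s \in \{1,\dots,n\}$ and derive the stronger inequality $K(K+1) \geq n$, which easily exceeds $(\sqrt{8n+1}-1)/8$ for every $n \geq 10$.

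The central structural claim I would establish first is that, for every $s\in\{1,\dots,n\}$, each fiber $F_s(\mu_i)=\fii_s^{-1}(\mu_i)$ with $i \geq 1$ is a single connected arc of $C_n$. Let $a_s$ denote the number of connected arcs in the partition of $C_n$ induced by $\fii_s$. First, $a_1 = 1$: indeed, if some circle point mapped to $\mu_0$ under $\fii_1$ then an adjacent pair of circle points would produce a switch of image distance $n$, contradicting $\lip\fii_1 \leq K < n$. Next, $a_n = n$ since $\fii_n = \mathrm{id}$. Each split $\fii_{s-1} \to \fii_s$ increases $a_s$ by at least $1$, and by exactly $2$ if $\mu_s$ lies strictly in the interior of the arc $F_{s-1}(\fii_{s-1}(\mu_s))$ being split. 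Because $\sum_{s=2}^{n}(a_s-a_{s-1}) = n-1$ and each summand is at least $1$, equality throughout forces every increment to be exactly $1$: no interior split ever occurs, and every fiber stays a connected arc.

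Given this arc structure, $\lip\fii_s$ is the maximum image distance between two circle-adjacent points, which equals $\max_i d(\mu_{\sigma(i)},\mu_{\sigma(i+1)})$, where $(\mu_{\sigma(1)},\dots,\mu_{\sigma(s)})$ is the cyclic ordering of $\{\mu_1,\dots,\mu_s\}$ around $C_n$. Writing $g_1,\dots,g_s$ for the cyclic gaps (so $g_i \geq 1$ and $\sum g_i = n$), this maximum is $\max_i \min(g_i, n-g_i)$, so $\lip\fii_s \leq K$ forces every $g_i$ to lie in $[1,K] \cup [n-K, n-1]$. A short dichotomy then finishes the argument: two gaps of size $\geq n-K$ would already sum to more than $n$ (using $K < n/2$), so at most one gap is large. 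Either all $g_i \leq K$, in which case $sK \geq \sum g_i = n$ gives $s \geq n/K$; or exactly one gap is large and the remaining $s-1$ gaps each lie in $[1,K]$ and sum to at most $K$, forcing $s \leq K+1$. If $K(K+1) < n$, choosing any integer $s \in (K+1, n/K)$ yields $\lip\fii_s > K$, a contradiction.

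The main obstacle I anticipate is the arc-counting step above; the rest is routine bookkeeping with cyclic gaps, followed by an immediate algebraic check that $K(K+1) \geq n$ implies $K \geq (\sqrt{8n+1}-1)/8$ when $n \geq 10$.
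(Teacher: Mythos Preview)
Your central structural claim — that every fiber $F_s(\mu_i)$ is a single connected arc of $C_n$ — does not follow from the hypotheses, and the arc-counting argument you sketch for it is circular. The inequality $a_s-a_{s-1}\ge 1$ is asserted for every $s$, but your justification (``by exactly $2$ if $\mu_s$ lies strictly in the interior of the arc $F_{s-1}(\fii_{s-1}(\mu_s))$'') already presupposes that $F_{s-1}(\fii_{s-1}(\mu_s))$ is a single arc, which is precisely what is to be proved. Once some fiber of $\fii_{s-1}$ has two or more arc-components, one may take $F_s(\mu_s)$ to be a union of whole components of $F_{s-1}(\fii_{s-1}(\mu_s))$; no new boundary edge is created and $a_s=a_{s-1}$. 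Concretely, on $C_6$ with $\mu_1=x_1$, $\mu_2=x_4$, and $F_2(x_4)=\{x_2,x_4,x_6\}$, one has $a_2=6$; choosing $\mu_3=x_2$ with $F_3(x_2)=\{x_2\}$ gives $a_3=6$ as well. Nor does the Lipschitz hypothesis rescue the claim: with $n=100$, $\mu_1=x_1$, $\mu_2=x_2$, and $F_2(\mu_2)=\{x_2,x_{50}\}$, a direct check gives $\lip\fii_2=1$ while both fibers of $\fii_2$ have two components. So the structural claim is simply false in general, and the gap-counting argument built on top of it does not get off the ground. (There is also a small slip later: the dichotomy ``$s\le K+1$ or $s\ge n/K$ for every integer $s$'' yields only $K(K+2)\ge n$, not $K(K+1)\ge n$, though either would suffice.)

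The paper's proof proceeds along an entirely different line and never asserts any arc structure for the fibers. Instead it works with chains: using the Step lemma to bound the jump size along any chain by $2K$, it introduces left and right neighbourhoods $A$, $B$ of $\mu_1$ of radius $3K$ and a labelling $f\colon C_n\setminus\{\mu_1\}\to\{A,B\}$ recording through which side the chain $S_w$ last passes near $\mu_1$. One then finds two adjacent points $a,b$ with $f(a)=A$ and $f(b)=B$; comparing the chains $S_a$ and $S_b$ and applying one of the retractions $\fii_{l_j}$ forces a chain segment to traverse nearly half the circle while its endpoint stays within distance $4K$ of $\mu_1$, producing the required lower bound on some $\lip\fii_s$. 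This orientation/chain argument is what you would need to replace the missing structural step.
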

\begin{proof}
Let us fix $n\geq 10$ and denote $K=\frac{\sqrt{8n+1}-1}{8}$. We have $\mu_0=0$ and $\mu_1\in C_n$ with $\fii_1(x)=\mu_1$ for all $x\in C_n$ and $\fii_1(0)=0$. Indeed, if $\fii_1(y)=0$ for some $y\in C_n$, then the sets $F_1(0)$ and $F_1(\mu_1)$ have distance $1$. Since they are finite, there exist $w\in F_1(0)$, $z\in F_1(\mu_1)$ such that $d(w,z)=1$ and clearly $d(\fii_1(w),\fii_1(z))=n$, which trivially yields the result, as $n>K$. We prove the theorem by contradiction and assume therefore, $\lip\fii_i< K$ for all $i\in\{1,...,n\}$.

For every point $x\in C_n$ there exists a $k\in\{1,...,n\}$ such that $\{x\}=\{\mu_k\}=\fii_k(C_n)\setminus\fii_{k-1}(C_n)$ and therefore there exists exactly one chain $S_x=(\mu_{k_1},\mu_{k_2},...,\mu_{k_l})$, such that $\mu_{k_1}=\mu_1$ and $\mu_{k_l}=x$ (equivalently $k_1=1$ and $k_l=k$).

Let us introduce sets 
$$A=\{y|\ y\in C_n\setminus\{\mu_1\},d(y,\mu_1)\leq 3K,\text{$y$ lies to the left of $\mu_1$}\}$$
$$B=\{y|\ y\in C_n\setminus\{\mu_1\},d(y,\mu_1)\leq 3K,\text{$y$ lies to the right of $\mu_1$}\}$$
and a mapping $f:C_n\setminus\{\mu_1\}\to\{A,B\}$,
$$f(w)=\begin{cases}
A & \text{there is a $z\in S_w\cap A$ such that for every $y\in S_w$, $z\prec y$, we have $y\notin A\cup B$}\,, \\ 
B & \text{there is a $z\in S_w\cap B$ such that for every $y\in S_w$, $z\prec y$, we have $y\notin A\cup B$}\,. \\
\end{cases}$$
Note that the definitions of $A,B$ make perfect sense, as $3K<\frac{n}{2}$. Also, the mapping $f$ is well-defined, as for every $w\in C_n\setminus\{\mu_1\}$ the intersection $S_w\cap(A\cup B)$ is nonempty. Indeed, according to Step lemma \ref{step} applied on the $C_n$, the distance between any two adjacent points in a chain is smaller than $2K$ and therefore for the second element $z\in S_w$ (meaning $S_w=(\mu_1,z,...,w)$) we have $d(\mu_1,z)\leq 2K$ and thus $z\in A$ or $z\in B$.

Observe that $f(w)=A$ for every $w\in A$ and $f(w)=B$ for every $w\in B$. We prove there exist two points $a,b\in C_n\setminus\left(\{\mu_1\}\cup A\cup B\right)$ such that $d(a,b)=1$, $f(a)=A$ and $f(b)=B$.

Let us assume for contradiction that $f(w)=A$ for all points $w\in C_n\setminus\left(\{\mu_1\}\cup A\cup B\right)$. Denote $z$ the closest point to the right of the set $B$, i.e. the only point with $3K <d(z,\mu_1)\leq 3K+1$ and $d(z,B)=1$. We have $f(z)=A$, which means the chain $S_z=(\mu_1,\mu_{k_2},...,z)$ leaves the set $A$ and goes to the left around (meaning omitting the set $A\cup B$) the circle to the point $z$, with steps smaller than $2K$. Therefore there exists a point $\mu_l\in S_z$ such that $d(\mu_1,\mu_l)\geq\frac{n-2K}{2}$. But then we have $\fii_{l}(\mu_1)=\mu_1$, $\fii_{l}(z)=\mu_l$, which yields
$$\lip\fii_l\geq \frac{d(\mu_1,\mu_l)}{d(\mu_1,z)}\geq\frac{n-2K}{2(3K+1)}\geq K,$$
as $n\geq 10$, which contradicts our assumption.

Therefore, let there exist two points $a,b\in C_n\setminus\left(\{\mu_1\}\cup A\cup B\right)$ such that $d(a,b)=1$, $f(a)=A$ and $f(b)=B$. Consider the two chains $S_a=(\mu_1,\mu_{k_1},\mu_{k_2}...,a)$ and $S_b=(\mu_1,\mu_{l_1},\mu_{l_2}...,b)$ and let $i$ and $j$ be such that $\mu_{k_i}\in A$, $\mu_{l_j}\in B$ and we have $\mu,\nu\notin A\cup B\cup\{\mu_1\}$ for every $\mu\in S_a$, $\mu_{k_i}\prec \mu$ and every $\nu\in S_b$, $\mu_{l_j}\prec \nu$. Note that $d(\mu_1,\mu_{l_j})\geq K+1$ and $d(\mu_1,\mu_{k_i})\geq K+1$.

Without loss of generality suppose $k_i<l_j$. Then $\fii_{l_j}(b)=\mu_{l_j}$ and $d(\fii_{l_j}(a),\mu_{l_j})\leq K$. This implies $u_a:=\fii_{l_j}(a)$ has distance at most $K$ from the set $B$, which yields $d(\mu_1,u_a)\leq 4K$ and the chain $S=(\mu_{k_i},...,u_a)$ must go from the set $A$ to the left around the circle closer to the set $B$. Thus there must exist a point $v=\mu_s\in S$ such that $d(v,\mu_1)\geq\frac{n-2K}{2}$. It follows that $$\lip \fii_s\geq \frac{d(\fii_s(u_a),\fii_s(\mu_1))}{d(u_a,\mu_1)}=\frac{d(v,\mu_1)}{d(u_a,\mu_1)}\geq\frac{n-2K}{8K}=K,$$ which is again a contradiction. We conclude there exists an $s\in\{1,...,n\}$ such that $\lip\fii_s\geq K=\frac{\sqrt{8n+1}-1}{8}$.
\end{proof}
\begin{corollary}\label{no basis} There exists a uniformly discrete set $N\subseteq\R^2$ such that the Free space $\F(N)$ has no retractional Schauder basis.
\end{corollary}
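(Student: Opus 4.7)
The plan is to realize $N\subseteq\R^2$ as a well-separated disjoint union of bi-Lipschitz copies $\widetilde C_n^0$ of the graph circles $C_n^0$ ($n\geq 10$), and to argue that any retractional Schauder basis on $\F(N)$ would restrict on each copy to yield a retractional basis of $\F(\widetilde C_n^0)$ with uniformly bounded constant, contradicting Theorem~\ref{circle}. Each $C_n^0$ embeds bi-Lipschitz into $\R^2$ with distortion bounded by an absolute constant $D\leq 2\pi$: place the $n$ cycle vertices at equally spaced points on a Euclidean circle and the center at the Euclidean center; comparing chord lengths with graph distances gives the bound. Rescale each $\widetilde C_n^0$ so that its minimum interior Euclidean distance is $\geq 1$, and place the copies so that the separation $\dist(\widetilde C_n^0,N\setminus\widetilde C_n^0)$ grows faster than any constant multiple of $\operatorname{diam}(\widetilde C_n^0)$---for instance, translate the $n$-th copy so that its distance from everything placed before exceeds $n\cdot\operatorname{diam}(\widetilde C_n^0)$. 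Take $N$ to be the union together with a fixed base point $\mu_0$.

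Assume for contradiction that $\F(N)$ admits a retractional Schauder basis with constant $K$ and retractions $\{\fii_i\}$, giving an enumeration $\mu_0,\mu_1,\ldots$ of $N$. For each $n$, let $j_1^n<\cdots<j_{n+1}^n$ be the indices at which the points of $\widetilde C_n^0$ appear. Choose $n$ large enough that both $\dist(\widetilde C_n^0,N\setminus\widetilde C_n^0)>K\cdot\operatorname{diam}(\widetilde C_n^0)$ and $(\sqrt{8n+1}-1)/8>DK$ hold. Since $\fii_{j_k^n}$ fixes $\mu_{j_1^n}\in\widetilde C_n^0$ and is $K$-Lipschitz, it maps $\widetilde C_n^0$ into the Euclidean ball of radius $K\operatorname{diam}(\widetilde C_n^0)$ around $\mu_{j_1^n}$, which by construction lies entirely in $\widetilde C_n^0$. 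Thus $\fii_{j_k^n}|_{\widetilde C_n^0}$ is a $K$-Lipschitz retraction with image $\{\mu_{j_1^n},\ldots,\mu_{j_k^n}\}$, and the family of these restrictions satisfies the hypotheses of the finite analogue of Corollary~\ref{one}, producing a retractional Schauder basis of $\F(\widetilde C_n^0)$ with basis constant $\leq K$. The bi-Lipschitz identification of $\widetilde C_n^0$ with $C_n^0$ then yields a retractional basis of $\F(C_n^0)$ with basis constant $\leq DK$, and Theorem~\ref{circle} applied to this basis forces $DK\geq(\sqrt{8n+1}-1)/8$, contradicting our choice of $n$.

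The main obstacle I anticipate is that the induced basis on $\F(\widetilde C_n^0)\cong\F(C_n^0)$ is based at $\mu_{j_1^n}$, which need not correspond to the center of $C_n^0$---the base point assumed in Theorem~\ref{circle}. The resolution is to verify that the proof of Theorem~\ref{circle} adapts to an arbitrary base point: the sets $A,B$ can be defined around the first cycle vertex appearing in the basis sequence (equal to $\mu_{j_1^n}$ itself when that point lies on the cycle, and otherwise the next basis point that does), and the same Step-lemma based ``going around the cycle'' argument gives the same quantitative lower bound up to an absolute factor. This preserves the contradiction and completes the proof.
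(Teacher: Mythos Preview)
Your approach is correct and close in spirit to the paper's, but the paper's construction is arranged to avoid the base-point obstacle you flag at the end.

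The paper takes $N=\bigcup_{n\geq 1} C^0_{4^n}$ where all circles share the \emph{same} center $0$; the metric extends each circle metric and sets $d(x,y)=\max\{4^i,4^j\}$ for $x\in C_{4^i}$, $y\in C_{4^j}$, $i\neq j$. Given commuting retractions $\fii_i$ on $N$, for each $k$ one lets $n$ be the first index with $\mu_n\in C_{4^k}$. Either some $\fii_j$ with $j\geq n$ sends a point of $C_{4^k}$ outside that cycle (and then an adjacent-pair argument forces $\lip\fii_j\geq 4^k$), or every such $\fii_j$ preserves $C_{4^k}$. In the latter case the restrictions to $C^0_{4^k}=\{0\}\cup C_{4^k}$, reindexed along the appearance times of points of $C_{4^k}$, form a system of retractions satisfying Corollary~\ref{one} \emph{based at the center $0$}, so Theorem~\ref{circle} applies verbatim and gives $\sup_i\lip\fii_i\geq(\sqrt{8\cdot 4^k+1}-1)/8$.

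Your disjoint, well-separated copies achieve the same invariance (your separation argument replaces the paper's adjacent-pair dichotomy), and the reindexing/restriction step is the same. The cost is exactly what you anticipate: the restricted system on $\widetilde C_n^0$ is based at the first-appearing point $\mu_{j_1^n}$, which under the bi-Lipschitz identification need not be the center. Your proposed fix---rerun the proof of Theorem~\ref{circle} with $A,B$ defined around the first cycle vertex in the enumeration---is sound, but it is genuine extra work: one must also check that the off-cycle center, now appearing at some later index, does not interfere with the chain arguments (it does not, since any retraction sending a cycle point to the center already has Lipschitz constant $\geq n$). The paper's concentric construction buys precisely the avoidance of this detour, making the invocation of Theorem~\ref{circle} immediate.
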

\begin{proof}
Let $N=\bigcup_{n=1}^\infty C^0_{4^n}$ be a union of circles with the same centre $0$ and with radii $4^n$, $n\in\N$. Suppose $d_n$ is the metric on $C^0_{4^n}$. Let us define a metric on $N$ in the following way: 
$$d(x,y)=\begin{cases}
d_n(x,y) & \text{ if } x,y\in C^0_{4^n}\,,\\
\max\{4^i,4^j\} & \text{ if } x\in C_{4^i},y\in C_{4^j},i\neq j\,.\\
\end{cases}$$
It is clear that $d$ is indeed a metric on $N$ and one has no difficulties to embed $N$ into $\R^2$ in a bilipschitz way, actually with distortion not worse than $2\pi$.
We show that every sequence of retractions $\fii_i:N\to N$ satisfying conditions $(i)$ and $(iv)$ from Corollary \ref{one} cannot satisfy the condition $(iii)$ of that corollary.

Let therefore $\fii_i:N\to N$ be a commuting sequence of retractions such that $\fii_0(0)=0$ and $|\fii_i(N)|=i+1$. We show that for every $k\in\N$, $k\geq 4$, there exists an $n=n_k\in\N$ such that $\lip\fii_{n_k}\geq k.$ Pick therefore $k\in\N$, $k\geq 4$, and find the smallest $n$ such that $\mu_n\in C_{4^k}$. Then $\fii_i(\mu_n)=\mu_n$ for every $i\geq n$. If there exist $j\geq n$ and $x\in C_{4^k}$ such that $\fii_j(x)\notin C_{4^k}$, we have $\lip\fii_j\geq 4^k\geq k$ and the proof is finished. Indeed, if we take $x\in C_{4^k}$ such that $\fii_j(x)\notin C_{4^k}$ and without loss of generality we assume $x$ is such that $d(x,\mu_n)$ is minimal among all $x\in C_{4^k}$ with $\fii_j(x)\notin C_{4^k}$, we have $d\left(\fii_j(y),\fii_j(x)\right)\geq 4^k\geq k$ for one of $x$'s neighbours $y$ (i.e. $d(x,y)=1$). This means $\lip\fii_j\geq k$.
If, on the contrary, we have $\fii_i(x)\in C_{4^k}$ for all $i\geq n$ and all $x\in C_{4^k}$, we find ourselves in the case of Theorem \ref{circle}. Indeed, if we view the circle $C^0_{4^k}$ as a set $C^0_{4^k}=\{0,\mu_{s_1},\mu_{s_2},...,\mu_{s_{4^k}}\}$ (for some eligible $s_1,s_2,...,s_{4^k}\in\N$) and look only at retractions $\fii_0,\fii_{s_1},\fii_{s_2},...,\fii_{s_{4^k}}$ restricted to the circle $C^0_{4^k}$, we apply \ref{circle} and conclude $\max\{\lip\fii_{s_1},\lip\fii_{s_2},...,\lip\fii_{s_{4^k}}\}\geq\frac{\sqrt{8\cdot 4^k+1}-1}{8}\geq k$.
\end{proof}
We see it is impossible to build a retractional Schauder basis on $\F(N)$. However, the space $\F(N)$ has an extensional Schauder basis as we are going to show in the next proposition:
\begin{proposition} Let $N=\bigcup_{n=1}^\infty C^0_{4^n}$ be the metric space from Corollary \ref{no basis}. Then $\F(N)$ has an extensional monotone Schauder basis.
\end{proposition}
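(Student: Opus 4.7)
The plan is to construct an explicit monotone extensional Schauder basis by defining one-point-at-a-time bounded linear extensions and invoking Theorem \ref{operator} with constant $K=1$. By Lemma \ref{char} this amounts to deciding, for each new $\mu_n$, which linear combination of earlier $\d_{\mu_i}$'s to subtract from $\d_{\mu_n}$ to form the basis vector $e_n$ (up to scaling). Unlike the retractional case blocked by Corollary \ref{no basis}, the coefficients need not lie in $\{0,1\}$, so genuine convex interpolation is available.

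First I would enumerate $N = \{\mu_n\}_{n=0}^\infty$ with $\mu_0 = 0$ by listing the points of $C_{4^1}$ entirely before those of $C_{4^2}$, then those of $C_{4^2}$, and so on. Within each circle $C_{4^k}$, whose cardinality $4^k$ is a power of $2$, I would use a dyadic bisection order: an arbitrary initial point, then its antipode, then the two midpoints of the resulting semicircles, and so on. For $\mu_n \in C_{4^k}$ I then define a one-step extension $R_n : \lip_0(M_{n-1}) \to \lip_0(M_n)$ by: if $\mu_n$ is the first listed point of $C_{4^k}$, set $R_n f(\mu_n) = 0$; if it is the second (with first point $\mu_m$), set $R_n f(\mu_n) = f(\mu_m)$; otherwise, letting $\mu_a, \mu_b$ be the two cyclic neighbours of $\mu_n$ in $M_{n-1} \cap C_{4^k}$, set
\[
R_n f(\mu_n) = \frac{d(\mu_n,\mu_b)}{d(\mu_a,\mu_b)}\, f(\mu_a) + \frac{d(\mu_n,\mu_a)}{d(\mu_a,\mu_b)}\, f(\mu_b).
\]
Composing iteratively, $E_n f := \lim_{m\to\infty} R_m \cdots R_{n+1}(f|_{M_n})$ is a well-defined pointwise limit. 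Conditions $(1)$, $(3)$, $(4)$ and $(5)$ of Theorem \ref{operator} then follow routinely from the construction, so condition $(2)$ with $K = 1$ reduces to verifying $\|R_n\| \leq 1$ for every $n$.

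The main obstacle is exactly this last bound. Cases 1 and 2 are painless: every point of $C_{4^k}$ has distance exactly $4^k$ to $0$ and to every point of every smaller circle, so the Lipschitz check against any such previously placed point $p$ collapses to the corresponding check already satisfied at $\mu_m$ (or at $0$). The interpolation case is more delicate, because the Lipschitz bound must hold against \emph{every} previously placed point of $C_{4^k}$, not merely the immediate neighbours $\mu_a, \mu_b$. The cleanest way I see is to extend $f$ piecewise-linearly to the whole circle continuum, joining consecutive points of $M_{n-1} \cap C_{4^k}$ by arcs and interpolating along each; since bisection always places $\mu_n$ on the shorter arc between its neighbours, the resulting continuum extension is $1$-Lipschitz with respect to the circle metric (by a shortest-arc triangle-inequality argument), and restricting back to $M_n$ yields the desired bound. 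The basis is extensional by construction via Lemma \ref{char}.
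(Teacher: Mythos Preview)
Your approach is correct and shares the paper's core idea: enumerate $N$ circle by circle, extend Lipschitz functions by arc-wise linear interpolation within the current circle while leaving later circles at zero, and apply Theorem~\ref{operator} with $K=1$. The implementations differ in one technical point. The paper enumerates each $C_{4^k}$ \emph{consecutively} along its orientation; after $j$ steps the outstanding arc from $x_j$ back to the first point is the longer one, so the interpolation weights have to be written with the oriented arc-lengths $d^l,d^r$ rather than the circle metric $d$, and the Lipschitz estimate then splits into the cases $d(x,y)=d^r(x,y)$ versus $d(x,y)=d^l(x,y)$. Your dyadic bisection order guarantees that every arc between cyclically consecutive already-placed points has length at most $4^k/2$, so arc length and $d$ coincide on those arcs, the formula $R_nf(\mu_n)=\tfrac{d(\mu_n,\mu_b)}{d(\mu_a,\mu_b)}f(\mu_a)+\tfrac{d(\mu_n,\mu_a)}{d(\mu_a,\mu_b)}f(\mu_b)$ really is linear interpolation along the relevant arc, and your continuum piecewise-linear extension has slope at most $\norm{f}$ on each arc, hence is $\norm{f}$-Lipschitz for the circle metric without further case analysis. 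The paper's route is a bit more flexible (it remarks that any circle-by-circle enumeration works, at the cost of carrying the $d^l,d^r$ bookkeeping), while yours is slightly more elementary.
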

\begin{proof} First, note that we have orientation of every $C_{4^n}$, $n\in\N$. For every $i\in\N$, define $k=k(i)$ as the unique integer such that $\frac{4^{k}-1}{3}\leq i< \frac{4^{k+1}-1}{3}$. Let $N=\{0,x_1,x_2,x_3,...\}$ be enumerated in such way that for every $i\in\N$ we have $x_i\in C_{4^{k}}$ and that the enumeration respects the orientation on every circle $C_{4^k}$. Namely, if $x_i,x_{i+1}\in C_{4^k}$, we have that $d(x_i,x_{i+1})=1$ and $x_{i+1}$ lies to the right of $x_i$. Denote $D_i=\{0,x_1,x_2,...x_{i}\}$.

We are going to define a sequence of extension operators $P_i:\lip_0(D_i)\to\lip_0(N)$ and prove they satisfy the assumptions of Theorem \ref{operator}. In order to do that, let us define some preparatory notions. Define the left and the right "$D_i$-neighbour" functions $\nu^l_i,\nu^r_i:\bigcup_{n=1}^{k(i)} C_{4^n}^0\to D_i$ as follows: For each $n\in\{1,2,...,k(i)\}$ and $x\in C_{4^{n}}$, let $\nu^l_i(x)\in D_i$ be the closest point to the left of $x$ and let $\nu^r_i(x)\in D_i$ be the closest point to the right of $x$. We set $\nu^l_i(0)=\nu^r_i(0)=0$. Note that $\nu^l_i(x)=\nu^{r}_i(x)=x$ if and only if $x\in D_i$. Further we need to define "right-" and "left-" metric function (not proper metrics) on every circle $C_{4^n}$. For points $x,y\in C_{4^n}$ we set the value $d^l(x,y)$ as the length of the path (in the graph $C_{4^n}$) going from $x$ to the left up to $y$. Analogously, we set $d^r(x,y)$ as the length of the path going from $x$ to the right up to $y$. It is clear that for $x,y\in C_{4^n}$ we have $d^l(x,y)=d^r(y,x)$ and $d(x,y)=\min\{d^r(x,y),d^l(x,y)\}$.

Further we define for every $i\in\N$ the $i$-th interpolation function $I_i:\lip_0(D_i)\times \bigcup_{n=1}^{k(i)} C_{4^n}^0\to\R$ via
$$I_i(f,x)=\frac{d^r(x,\nu^r_i(x))f(\nu^l_i(x))+d^l(x,\nu^l_i(x))f(\nu^r_i(x))}{d^l(x,\nu^l_i(x))+d^r(x,\nu^r_i(x))}\ \ \  \text{if } x\neq \nu^l_i(x) \text{ or } x\neq \nu^r_i(x)$$
and $I_i(f,x)=f(x)$ for $x=\nu^l_i(x)=\nu^r_i(x)$. Clearly, $I_i(f,x)$ is the value of linear interpolation of the function $f$ between closest points of $x$ to the left and to the right from the set $D_i$, given we take $x$ itself to be the closest point to $x$ in any direction if $x\in D_i$. Let now $f\in\lip_0(D_i)$. Then we define our (extension) operators $P_i$, $i\in\N$ simply as
$$P_if(x)=\begin{cases}
I_i(f,x) & x\in\bigcup_{n=1}^{k(i)} C_{4^n}^0\,,\\
0 & x\in\bigcup_{n=k(i)+1}^{\infty} C_{4^n}
\end{cases}$$
and of course, $P_0=0$. Clearly $P_i$ is a linear operator for every $i\in\N$ and the function $P_if$ is Lipschitz with the same constant as $f$. Indeed, if we take $x\in C_{4^{n}}$ and $y\in C_{4^{m}}$ with $m<	n$, we see from the definition of $I_i$ that $\min_{z\in C_{4^{n}}}f(z)\leq P_if(x)\leq\max_{z\in C_{4^{n}}}f(z)$ and $\min_{w\in C_{4^{m}}}f(w)\leq P_if(y)\leq\max_{w\in C_{4^{m}}}f(w)$. From that and from the fact that $d(z,w)=d(x,y)=4^n$ holds for all $z\in C_{4^n}$ and $w\in C_{4^m}$, we get
$$|P_if(x)-P_if(y)|\leq \max_{\substack{
            z\in C_{4^{n}}\\
            w\in C_{4^{m}}}}
|f(z)-f(w)|\leq 4^n \n f\n=d(x,y)\n f\n.$$
For $x\in C_{4^{n}}$ and $0$ we have clearly $|P_if(x)-P_if(0)|\leq\max_{z\in C_{4^{n}}}|f(z)|\leq d(x,0)\n f\n$.

The only nontrivial case to prove is the case $x,y\in C_{4^{k(i)}}$. Let therefore $x,y\in C_{4^{k(i)}}$. There are three cases. If $x,y\in D_i$, then $P_if(x)=f(x)$ and $P_if(y)=f(y)$, which is trivial. Let $x,y\notin D_i$ and $\nu^r_i(x)=\nu^r_i(y)=a$, $\nu_i^l(x)=\nu_i^l(y)=b$. We can assume $d^r(b,x)\leq d^r(b,y)$, for the roles of $x$ and $y$ are symetrical. From that we have $d^r(y,a)\leq d^r(x,a)$. If $d(x,y)=d^r(x,y)$, we have
\begin{align*}
|P_if(x)-P_if(y)|&=\left|\frac{f(b)d^r(x,a)+f(a)d^r(b,x)}{d^r(b,a)}-\frac{f(b)d^r(y,a)+f(a)d^r(b,y)}{d^r(b,a)}\right|\\
&=\left|\frac{f(b)d^r(x,y)-f(a)d^r(x,y)}{d^r(b,a)}\right|\\
&\leq \frac{\n f\n d(a,b)}{d^r(b,a)}d^r(x,y)\leq \n f\n d(x,y).
\end{align*}
If $d(x,y)=d^l(x,y)$, then $d(x,y)=d^r(y,a)+d^r(a,b)+d^r(b,x)$ and then from $d^r(b,a)-d^r(x,y)=d^r(b,x)+d^r(y,a)$ we have by triangle inequality
\begin{align*}
|P_if(x)-P_if(y)|&=\left|\frac{f(b)d^r(x,y)-f(a)d^r(x,y)}{d^r(b,a)}\right|\\
&=\left|\frac{f(b)\left(d^r(x,y)-d^r(b,a)\right)+(f(b)-f(a))d^r(b,a)+f(a)\left(d^r(b,a)-d^r(x,y)\right)}{d^r(b,a)}\right|\\
&=\left|\frac{\left(d^r(b,x)+d^r(y,a)\right)\left(f(a)-f(b)\right)+(f(b)-f(a))d^r(b,a)}{d^r(b,a)}\right|\\
&\leq \n f\n\left(\frac{d(a,b)}{d^r(a,b)}\left(d^r(b,x)+d^r(y,a)\right)+d(a,b)\right)\leq\n f\n d(x,y)\\
\end{align*}
The case $x\in D_i$, $y\notin D_i$ is proved in a similar way.

We see that the functions $f_j=P_i\left(\chi_{\{x_j\}}\right)$, $1\leq j\leq i$ create a basis of each space $P_i(\lip_0(N))$, hence $\dim P_i(\lip_0(N))=i$ for every $i\in\N$.

To prove the commutativity it suffices to prove $P_{i+1}P_i=P_iP_{i+1}=P_i$ for every $i\in\N$. While $P_{i}P_{i+1}=P_i$ is clear, we prove for every $f\in\lip_0(N)$ we have $P_{i+1}P_if=P_if$. Fix $f\in\lip_0(N)$. If $k(i+1)>k(i)$, then $D_i=\bigcup_{n=1}^{k(i)} C_{4^n}^0$ and $P_{i+1}P_i f(x)=f(x)=P_i f(x)$ for all $x\in \bigcup_{n=1}^{k(i)} C_{4^n}^0$ and $P_{i+1}P_i f(x)=0=P_i f(x)$ for all $x\notin \bigcup_{n=1}^{k(i)} C_{4^n}^0$. Let therefore $k(i+1)=k(i)$.

Denote $a=x_i=\nu^l_{i}(x_{i+1})$ and $b=\nu^r_{i}(x_{i+1})$. All we need to check is $P_{i+1}P_i f(y)=P_i f(y)$ holds for all $y\in C_{4^{k(i)}}\setminus D_i$. Indeed, for all other points $x$ we have $P_if(x)=P_{i+1}f(x)$. Take therefore a point $y\neq x_{i+1}$ (otherwise it is trivial). Note that $\nu^l_{i}(y)=a$, $\nu^r_{i}(y)=b$ and that $d^r(a,x_{i+1})=1$. Then we have
\begin{align*}
P_{i+1}(P_i f)(y)&=\frac{d^r(x_{i+1},y)P_if(b)+d^r(y,b)P_if(x_{i+1})}{d^r(x_{i+1},b)}\\
&=\frac{d^r(x_{i+1},y)f(b)+d^r(y,b)\cdot\frac{f(b)+d^r(x_{i+1},b)f(a)}{d^r(a,b)}}{d^r(x_{i+1},b)}\\
&=\frac{d^r(x_{i+1},y)d^r(a,b)+d^r(y,b)}{d^r(x_{i+1},b)d^r(a,b)}\cdot f(b)+\frac{d^r(y,b)}{d^r(a,b)}\cdot f(a)\\
&=\frac{d^r(x_{i+1},y)d^r(x_{i+1},b)+d^r(x_{i+1},y)+d^r(y,b)}{d^r(x_{i+1},b)d^r(a,b)}\cdot f(b)+\frac{d^r(y,b)}{d^r(a,b)}\cdot f(a)\\
&=\frac{d^r(x_{i+1},b)\left(1+d^r(x_{i+1},y)\right)}{d^r(x_{i+1},b)d^r(a,b)}\cdot f(b)+\frac{d^r(a,y)}{d^r(a,b)}\cdot f(a)\\
&=\frac{d^r(a,y)}{d^r(a,b)}\cdot f(b)+\frac{d^r(a,y)}{d^r(a,b)}\cdot f(a)\\
&=P_i f(y)\\
\end{align*}
and the commutativity is proved.

Let $i\in\N$. If $f_{\a}\to f$ pointwise, then for every $x\in D_i$ we have $P_if_{\a}(x)=f_{\a}(x)\to f(x)=P_if(x)$ and for every $x\in\bigcup_{l=k(i)+1}^\infty C_{4^{l}}$ we have $P_if_{\a}(x)=0=P_if(x)$. Finally, for every $x\in C_{4^{k(i)}}\setminus D_i$ we have $P_if_{\a}(x)=\g_x f_{\a}(a_x)+(1-\g_x)f_{\a}(b_x)$, for some eligible $\g_x\in [0,1]$, $a_x,b_x\in D_i$ and the choice of these points depends only on $x$ (and $i$ of course). Therefore $P_if_{\a}\to P_if$ pointwise, which means that every operator $P_i$ is continuous with respect to topology of pointwise convergence. 

Finally the sequence $P_if$ converges pointwise to $f$. Indeed, for every $y\in N$ there exists $i\in\N$ such that $y\in D_i\subseteq D_{i+1}\subseteq D_{i+2}\dots$, which yields $P_i f(y)=P_{j}f(y)=f(y)$ for all $j\geq i$. Hence $P_i f\to f$ pointwise.

Since the operators $P_i$ meet all assumptions from Theorem \ref{operator}, we get that there is a sequence of operators $T_i:\F(N)\to\F(N)$, $i\in\N_0$ with $T_i^*=P_i$ which build a monotone Schauder basis for $\F(N)$.
\end{proof}
\begin{remark*} It was not necessary for the construction of $P_i$'s to enumerate the set $N$ with respect to orientation on every circle $C_{4^k}$. Actually any enumeration which satisfies $x_i\in C_{4^{k}}$ for every $i\in\N$ works. Our choice only slightly simplifies the proof.
\end{remark*}
\section{Unconditionality of retractional Schauder bases}\label{unconditionality}
As we construct a Schauder basis on $\F(M)$ via sequence of retractions, as described in Corollary \ref{one}, properties of such a basis depend also on properties of the metric space $M$. Naturally it leads us to the question: What can $M$ be like such that there is an unconditional retractional Schauder basis on $\F(M)$? The next lemma sets a condition on the chains under which the acquired basis is conditional. It is further used in Theorem \ref{main}, which shows that retractional bases on Free spaces of nets in finite-dimensional spaces are conditional.
\begin{lemma}\label{alligned chains} Let $\a,\b>0$ and let $N$ be an $\a$-separated metric space, such that there exist retractions $\fii_i:N\to N$ satisfying the conditions from Corollary \ref{one}. Suppose there exists $n_0\in\N$ such that for every $n\in\N$, $n\geq n_0$ there exist chains $S=(\mu_{0},\mu_{k_1},...,\mu_{k_s})$ and $T=(\mu_0,\mu_{l_1},...,\mu_{l_m})$, $s,m\in\N$ with $d(\mu_{k_s},\mu_{l_m})\leq\b$ and $|S\setminus T|\geq n$. Then the retractional Schauder basis on $\F(N)$ corresponding to the retractions $\fii_i$ is conditional.
\end{lemma}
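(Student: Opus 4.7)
The plan is to argue by contradiction. Suppose the basis is unconditional, with constant $K$ (so $\|P_A x\|_{\F(N)}\leq K\|x\|_{\F(N)}$ for every $x$ and every set $A$ of basis indices). Fix $n\geq n_0$ and invoke the hypothesis to get chains $S=(\mu_0,\mu_{k_1},\ldots,\mu_{k_s})$ and $T=(\mu_0,\mu_{l_1},\ldots,\mu_{l_m})$ with $d(\mu_{k_s},\mu_{l_m})\leq\beta$ and $|S\setminus T|\geq n$. Since both chains start at $\mu_0$ and the chain from $0$ to any given point is unique (Preliminaries), $S\cap T$ is a common initial segment of both; let $\mu_p=\mu_{k_r}=\mu_{l_r}$ be its last point, so that $s-r\geq n$. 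Exploiting the two-point molecule form $e_{k_i}=\d_{\mu_{k_i}}-\d_{\mu_{k_{i-1}}}$ along each chain, the element $\mu:=\d_{\mu_{k_s}}-\d_{\mu_{l_m}}$ has $\F(N)$-norm equal to $d(\mu_{k_s},\mu_{l_m})\leq\beta$ and basis expansion
$$\mu=\sum_{i=r+1}^{s}e_{k_i}-\sum_{j=r+1}^{m}e_{l_j},$$
the common initial contributions cancelling out.

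The key step is isolating a projection of $\mu$ whose norm grows linearly in $n$. Enumerate the $S$-tail as $y_0=\mu_p,\,y_1,\ldots,y_{s-r}=\mu_{k_s}$ and take $A=\{k_{r+1},k_{r+3},k_{r+5},\ldots\}$, the odd-positioned basis vectors of the tail. After telescoping,
$$P_A\mu=\sum_{j\text{ odd}}\bigl(\d_{y_j}-\d_{y_{j-1}}\bigr)$$
is a combination of Dirac functionals with $\pm 1$ coefficients alternating on at least $n$ of the distinct points $y_0,\ldots,y_{s-r}$. To lower-bound $\|P_A\mu\|_{\F(N)}$, use $\alpha$-separation: prescribe $f(y_i)=(-1)^{i+1}\alpha/2$ on the tail. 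Since $|f(y_i)-f(y_j)|\in\{0,\alpha\}\leq d(y_i,y_j)$ for distinct $y_i,y_j$, this is $1$-Lipschitz, extends $1$-Lipschitzly to all of $N$ by McShane's formula, and (after subtracting the value at $0$) belongs to $\lip_0(N)$ with Lipschitz norm at most $1$. Each nonzero Dirac atom in $P_A\mu$ contributes $+\alpha/2$ to the evaluation of $f$ on $P_A\mu$, yielding $\|P_A\mu\|_{\F(N)}\geq n\alpha/2$.

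Combining with the unconditional bound $\|P_A\mu\|_{\F(N)}\leq K\|\mu\|_{\F(N)}\leq K\beta$ gives $K\geq n\alpha/(2\beta)$; since $n\geq n_0$ is arbitrary, this is absurd, so the basis must be conditional. The main obstacle is the lower bound, and in particular the need to skip every other basis vector in the tail: projecting onto the entire tail would yield only $\d_{\mu_{k_s}}-\d_{\mu_p}$, whose $\F(N)$-norm $d(\mu_{k_s},\mu_p)$ carries no a priori lower bound from the hypothesis. Skipping alternate basis vectors produces an alternating pattern whose positive and negative Dirac atoms all sit on distinct, hence pairwise $\alpha$-separated, points, and this is exactly what makes the explicit test function available and produces the $n\alpha/2$ lower bound.
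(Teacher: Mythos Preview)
Your argument is correct and is essentially the dual of the paper's proof: the paper applies the signed operator $\sum_i\varepsilon_i(P_{i+1}^*-P_i^*)$ to the $\pm\alpha/2$ function and tests the Lipschitz norm at the pair $(\mu_{k_s},\mu_{l_m})$, whereas you apply the subset projection $P_A$ to $\delta_{\mu_{k_s}}-\delta_{\mu_{l_m}}$ and test against the same $\pm\alpha/2$ function. The alternating-sign idea along the $S$-tail and the use of $\alpha$-separation to certify the test function is $1$-Lipschitz are identical in both proofs.
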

\begin{proof}
Let now $P_i$ be the associated Schauder projection to the mapping $\fii_i$ for each $i\in\N_0$, i.e. the projection to the subspace $\sp\{\d_{\mu_0},\d_{\mu_1},...,\d_{\mu_i}\}$. Instead of working directly with $P_0,P_1,P_2,...$ we will use their adjoints $P_0^*,P_1^*,P_2^*,...$ and for every $n\in\N$, $n\geq n_0$ we construct a function $f_n\in\lip_0(N)$ with $\n f_n\n\leq 1$ and find a sequence of signs $\e_0,\e_1,...,\e_{k_s}$ for some $s\geq n$ such that the following inequality holds 
$$\ln\sum_{i=0}^{k_s}\e_i(P_{i+1}^*-P_{i}^*)f_n\rn\geq \frac{\a (n-1)}{\b}.$$
Fix $n\in\N$ and chains $S=(\mu_0,\mu_{k_1},...,\mu_{k_s})$, $T=(\mu_0,\mu_{l_1},...,\mu_{l_m})$ for which we have $d(\mu_{k_s},\mu_{l_m})<\b$ and $|S\setminus T|\geq n$. Suppose now $t\in\{0,1,2,...,s-n\}$ is such that $\mu_{k_t}\in T$ and $\mu_{k_{t+1}}\notin T$ (we set $\mu_{k_0}=\mu_0$). We define the function $f_n$ on $N$ via the formula

$$f_n(x)=\begin{cases}
\frac{\a}{2} & x=\mu_{k_j}\text{ for } j \text{ odd},j>t\,,\\
\frac{-\a}{2} & x=\mu_{k_j}\text{ for } j \text{ even},j> t\,,\\
0 & \text{else}\,.
\end{cases}$$
Clearly, $f_n(\mu_0)=0$ and $\n f_n\n\leq 1$. For the following choice of sings $\e_0=1$,
$$\e_i=\begin{cases}
-\e_{i-1} & i=k_j \text{ for some }j\in\N\,,\\
\e_{i-1} & \text{else}\,,
\end{cases}$$
we have
$$\sum_{i=0}^{k_s}\e_i(P_{i+1}^*-P_{i}^*)=-P_0^{*}+2\sum_{j=1}^s(-1)^{j+1}P^*_{k_j}+(-1)^sP^{*}_{k_s+1}=:P$$
and then
\begin{align*}
\ln P\rn&\geq \ln P f_n\rn\geq \ln \frac{Pf_n(\mu_{k_s})-Pf_n(\mu_{l_m})}{d(\mu_{k_s},\mu_{l_m})}\rn\geq\frac{1}{\b}\ln Pf_n(\mu_{k_s})-Pf_n(\mu_{l_m})\rn=\\
&=\frac{1}{\b}\left\vert -f_n(0)+2\sum_{j=1}^s(-1)^{j+1}f_n(\mu_{k_j})+(-1)^sf_n(\mu_{k_s})+0\right\vert\\
&=\frac{1}{\b}\left\vert 2\sum_{j=t+1}^s\frac{\a}{2}-\frac{\a}{2}\right\vert\geq\frac{\a(s-t-1)}{\b}\geq\frac{\a (n-1)}{\b}.\\
\end{align*}
\end{proof}
Recall that a subset $S$ of a metric space $M$ is called an $\a,\b$-net whenever $S$ is $\a$-separated and $\b$-dense in $M$, i.e. $\inf_{x\neq y}d(x,y)\geq \a$, $x,y\in S$ and $\sup_{x\in M} d(x,S)\leq \b$.

In \cite{HN}, the authors constructed a system of retractions on the integer lattice in $c_0$ which satisfies the conditions of Corollary \ref{one}. Through suitable homomorphisms they further showed the existence of a basis on any Free space of a net in a separable $C(K)$ space or a net in $c_0^+$, the positive cone in $c_0$.
 
\begin{corollary} Let  $N$ be a net in any of the following metric spaces: $C(K)$, $K$ metrizable compact, or
$c_0^+$ (the subset of $c_0$ consisting of elements with non-negative coordinates). The basis on $\F(N)$ constructed in \cite{HN} is conditional.
\end{corollary}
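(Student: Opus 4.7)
The plan is to apply Lemma \ref{alligned chains} directly to the HN bases. Since \cite{HN} constructs the basis on $\F(N)$ for a net in $C(K)$ or in $c_0^+$ by conjugating a base case on the integer lattice $\Z_c^\N\subseteq c_0$ through a bilipschitz map, it suffices to establish the aligned-chain hypothesis for the lattice case and then transport it.

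In the lattice case, the HN retractions form a tree $T$ on $\Z_c^\N$ rooted at $0$: each point $\mu_n$ ($n\geq 1$) is connected by an edge to its retraction-predecessor, and the chain $T_0^{\mu_n}$ is the unique path from $0$ to $\mu_n$ in $T$. The goal is to exhibit, for every $n\in\N$, a pair of adjacent lattice points $x_n,y_n$ with $d(x_n,y_n)=1$ whose paths to the root in $T$ share only a short common initial segment and then follow entirely disjoint subpaths of total combined length at least $n$. This is where one has to unpack the explicit enumeration from \cite{HN}, which processes points in blocks corresponding to cubes of growing size and fills each new cube by scanning along coordinate directions; two lattice points lying on opposite sides of a distinguished separating hyperplane within a large cube typically end up in distant branches of $T$, even though a suitable pair of them remains at distance $1$. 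Assuming this can be made precise, the hypothesis of Lemma \ref{alligned chains} is satisfied with $\alpha=1$ and some absolute $\beta$ independent of $n$, and the lemma delivers the conditionality of the basis on $\F(\Z_c^\N)$.

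To pass to a net $N$ in $C(K)$ or in $c_0^+$, one uses the bilipschitz identification $\Phi$ from $\Z_c^\N$ (or from the relevant sublattice) onto $N$ that \cite{HN} constructs to transport the retractions. Chains on the lattice correspond under $\Phi$ to chains on $N$ generated by the transported retractions; the pair $(x_n,y_n)$ maps to a pair $(\Phi(x_n),\Phi(y_n))$ at distance at most $\mathrm{Lip}(\Phi)\cdot\beta$, while the symmetric difference of the associated chains is preserved. Since $N$ is a net, it is uniformly discrete with some separation constant $\alpha'>0$, and a final application of Lemma \ref{alligned chains} with $(\alpha',\mathrm{Lip}(\Phi)\cdot\beta)$ gives the conclusion.

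The principal obstacle is the first step: extracting from the concrete enumeration in \cite{HN} a pair of adjacent lattice points whose tree-paths to the root diverge unboundedly. Everything before and after that verification is formal, consisting only of the definition of the tree associated to a retractional basis and of the push-forward under a bilipschitz conjugation. An alternative, should a direct adjacent pair be awkward to exhibit, is to take $(x_n,y_n)$ at a bounded but larger distance (e.g.\ distance $2$ or $3$ across a corner of a cube), which only affects the choice of $\beta$ and not the applicability of Lemma \ref{alligned chains}.
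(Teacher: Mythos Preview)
Your outline coincides with the paper's proof: reduce to the integer lattice in $c_0$, verify the aligned-chain hypothesis there, apply Lemma~\ref{alligned chains}, and then transfer to general nets. Two points deserve comment.

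First, what you call the ``principal obstacle'' is in fact the trivial step. The construction in \cite[Lemma~14]{HN} is arranged so that chains run \emph{parallel along a coordinate axis}: for instance, the chain from $0$ to $(n,0,0,\dots)$ proceeds along the first axis, and the chain from $0$ to $(n,1,0,\dots)$ reaches $(0,1,0,\dots)$ and then also proceeds parallel to the first axis. These two chains share only their initial segment, have final points at distance~$1$, and the symmetric difference has length~$\geq n$. No delicate analysis of cube-filling or separating hyperplanes is needed; the parallel-axis structure gives the pairs $(x_n,y_n)$ immediately, with $\alpha=\beta=1$. This is exactly how the paper dispatches the lattice case in one sentence.

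Second, your transfer step via pushing chains forward under $\Phi$ and reapplying Lemma~\ref{alligned chains} is correct but roundabout. The paper simply observes that the bases on $\F(N)$ for the other nets are obtained from the lattice basis via Banach-space isomorphisms (induced by the bilipschitz identifications), and unconditionality is an isomorphic invariant of a Schauder basis. Hence conditionality of the lattice basis immediately implies conditionality of all the transported bases, with no need to revisit the metric hypotheses of the lemma.
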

\begin{proof}
First we consider the case $N=\Z^{<\omega}\subseteq c_0$, the integer lattice in $c_0$. Following the proof of Lemma $14$ in \cite{HN} we see, there are chains which go parallelly along the first coordinate axis (or any other coordinate axis). Every such two chains hence satisfy the conditions of the previous lemma, which yields that a basis arising from these retractions cannot be unconditional. As the existence of bases in other cases than $N$ being the integer lattice in $c_0$ was proven only by isomorphisms, we conclude that none of them are unconditional.
\end{proof}
\begin{theorem}\label{main}
Let $N$ be an $\a,\b$-net in a finite-dimensional normed space $X$ with $\dim X\geq 2$. Let $E=\{e_i\}_{i=1}^\infty$ be a retractional Schauder basis on $\F(N)$. Then $E$ is conditional.
\end{theorem}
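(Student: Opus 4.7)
The plan is to argue by contradiction, turning the hypothetical unconditionality of $E$ into a quasi-isometry between $(N,d)$ and a tree metric, which the assumption $\dim X \geq 2$ precludes.

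Assume $E$ has unconditional constant $M$. The explicit signed partial sum of basis projections constructed in the proof of Lemma~\ref{alligned chains} has operator norm at least $\a(n-1)/\b$ whenever chains from $0$ exist with endpoints at $X$-distance at most $\b$ and symmetric difference at least $n$; the unconditional bound, together with the Schauder basis constant $K$, therefore forces, for every fixed $\b > 0$, a uniform estimate $|S \setminus T| \leq C_0 \b + C_0'$ for all such pairs. Translating this to the unweighted tree metric $d_{\mathcal T}$ on $N$ induced by the parent--child structure of the retractions, we obtain $d_{\mathcal T}(x,y) \leq 2(C_0 \b + C_0')$ whenever $d(x,y) \leq \b$. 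Taking $\b$ to be a small multiple of the density of the net and chaining this close-pair estimate along a path of net-neighbors of length $O(d(x,y)/\b)$ between arbitrary $x,y\in N$ yields a global linear bound $d_{\mathcal T}(x,y) \leq C_1 d(x,y) + C_2$. In the opposite direction, the Step Lemma (Lemma~\ref{step}), applied with a connecting net-path of step size $\leq 2\b$, shows that each tree edge has $X$-length at most $4K\b$, hence $d(x,y) \leq 4K\b \cdot d_{\mathcal T}(x,y)$. Together the identity map is a quasi-isometry between $(N,d)$ and $(N,d_{\mathcal T})$.

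Tree metrics are $0$-hyperbolic in the sense of Gromov, and Gromov hyperbolicity is a quasi-isometry invariant, so $(N,d)$ would be $\d$-hyperbolic for some finite $\d$; since $N$ is a net in the normed space $X$, the same would pass to $X$ itself. But when $\dim X \geq 2$, any 2-dimensional linear subspace of $X$ contains (via the bi-Lipschitz identification of its norm with $\ell^2_2$) non-degenerate triangles of arbitrarily large diameter $R$ whose midpoints sit at distance of order $R$ from the opposite sides, ruling out $\d$-hyperbolicity for any finite $\d$. The contradiction is complete.

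The principal obstacle is the upgrade from the local close-pair bound on $d_{\mathcal T}$ to the global linear bound: one has to connect arbitrary $x,y \in N$ by a net-path of $O(d(x,y)/\b)$ neighbors and sum the close-pair estimates, keeping all multiplicative constants absolute so that the global estimate is truly linear in $d(x,y)$. If invoking the quasi-isometry invariance of Gromov hyperbolicity is felt too heavy for this paper's flavor, the final step can instead be performed concretely: pick four net points approximating the corners of a large square in a 2D subspace of $X$ and derive a direct contradiction from the failure of the four-point condition on $(N,d_{\mathcal T})$, whose severity grows with the side length while the quasi-isometry constants stay fixed.
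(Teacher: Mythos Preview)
Your approach is correct in outline but genuinely different from the paper's. Both proofs start from Lemma~\ref{alligned chains}, but then diverge. The paper verifies the hypothesis of that lemma directly: fixing $n$, it assumes that all pairs of chains to nearby points in a large annulus merge only outside a ball of radius $K\beta n$, and from this builds an explicit homotopy in $X\setminus\{0\}$ contracting a sphere $S_\e$ to a single point $\mu_m$, contradicting the fact that $S_\e$ is not null-homotopic in $X\setminus\{0\}$ when $\dim X\ge2$. Your route instead takes the contrapositive of Lemma~\ref{alligned chains}, extracts a uniform bound on $|S\setminus T|$ for chains with close endpoints, promotes this via chaining along net-paths to a global inequality $d_{\mathcal T}\le C_1 d+C_2$, pairs it with the Step Lemma bound $d\le C\,d_{\mathcal T}$, and concludes that $(N,d)$ is quasi-isometric to a tree, contradicting the non-hyperbolicity of $X$.

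The paper's argument is more self-contained: it uses nothing beyond elementary homotopy and avoids importing the QI-invariance of Gromov hyperbolicity. Your argument is more conceptual and arguably more portable---it shows, in effect, that any space carrying a retractional unconditional basis must be coarsely a tree, so the conclusion would extend immediately to nets in any non-hyperbolic geodesic space, not just finite-dimensional normed spaces. One point to tighten: the QI-invariance of hyperbolicity is standard only for geodesic (or quasi-geodesic) spaces, so you should pass explicitly from $(N,d)$ to $X$ and from $(N,d_{\mathcal T})$ to the metric realization of the tree before invoking it; both passages are themselves quasi-isometries, so this is routine. Your proposed fallback via the four-point condition on a large square also works and keeps the argument closer to the paper's elementary spirit.
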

In the following, $B_{\e}(x)$ denotes closed ball of radius $\e>0$ and centre $x\in X$, $B_{\e}^{\circ}(x)$ denotes its interior. In the same way $B_{\e}:=B_{\e}(0)$ and $S_{\e}$ denotes sphere of radius $\e$ and centre $0$.
\begin{proof}
Let $\fii_i:N\to N$ be the corresponding retractions to the basis $E$. We prove the theorem by showing that the assumptions of Lemma \ref{alligned chains} are met. Denote $\sup_{i\in\N}\lip\fii_i=K<\infty$. Pick $n\in\N$, such that $n>8K$. Define annulus with radii $r$ and $w$, $w<r$ as $A(r,w)=B_{r+w}(0)\setminus B^{\circ}_{r-w}(0)$. Our aim is to prove there exist chains $T,Z$ with final points $t,z\in A(3K\b n+\b,\b)\cap N$ with $d(t,z)\leq 2\b$ such that $x\in B_{K\b n}$ holds for $x=x_{t,z}$, the final point of the chain $T\cap Z$. Then we have $d(t,x)\geq 3K\b n-K\b n=2K\b n$ and Step lemma \ref{step} yields $|T\setminus Z|\geq n$, which by Lemma \ref{alligned chains} concludes the proof.

For the following, for every two points $x,y\in N$ with $x\prec y$ denote $T_x^y$ the chain with initial point $x$ and final point $y$. Assume now for contradiction, for every pair of points $t,z\in A(3K\b n+\b,\b)\cap N=:A$ with $d(t,z)\leq 2\b$ the final point $x_{t,z}$ of $T_0^t\cap T_0^z$ lies outside the ball $B_{K\b n}$. That means there exists a point $\mu_m\in N$ for some $m\in\N$ with $d(0,\mu_m)>K\b n$ such that $\fii_m(t)=\mu_m$ for every $t\in A$. To prove this, note that $0\in\bigcap_{t\in A}T_0^t$ and as $\bigcap_{t\in A}T_0^t$ is a chain, it has a final point which we denote $\mu_m$ and prove that $d(0,\mu_m)>K\b n$. We show that for every two points $t,z\in A$ the final point $x_{t,z}$ of the chain $T_0^t\cap T_0^z$ is of greater norm than $K\b n$. Clearly, if $d(t,z)\leq 2\b$, the statement holds as assumed. If $d(t,z)> 2\b$, we can find a finite sequence of points $y_1,...,y_l\in A$,  $l\in\N$ such that $d(y_i,y_{i+1})\leq 2\b$ for every $i\in\{1,...,l-1\}$ and that $y_1=t$ and $y_l=z$. Then $x_{t,z}\in\{x_{y_i,y_{i+1}}|\ i\in\{1,...,l-1\}\}$, which means $\n x_{t,z}\n> K\b n$. Note that for any three points $s,t,z\in A$ the final point $x_{s,t,z}$ of the chain $T_0^s\cap T_0^t\cap T_0^z$ is equal to one of the points $x_{s,t},x_{t,z},x_{s,z}$. Indeed, as $x_{s,t},x_{t,z}\in T_0^t$, we have that either $x_{s,t}\prec x_{t,z}$ or $x_{s,t}\succ x_{t,z}$. If $x_{s,t}\prec x_{t,z}$, then $x_{s,z}=x_{s,t}=x_{s,t,z}$ and the other case follows symmetrically. But from that we get inductively that for any finite number of points $t_1,...,t_v$, there are indices $i,j\in\{1,...,v\}$, such that the final point $x_{t_1,...,t_v}$ of the chain $\bigcap_{l=1}T_0^{t_l}$ equals $x_{t_i,t_j}$. Because for each two $t,z\in A$ we have $\n x_{t,z}\n>K\b n$ and $A$ is finite, we have $\n \mu_m\n> K\b n$.

Observe further, that $T_{\mu_m}^t\cap B_{\b n}=\emptyset$ holds for every chain $T_{\mu_m}^t$ with initial point $\mu_m$ and final point $t\in A$. Indeed, if $\mu_p\in T_{\mu_m}^t$, $p\in\N$ is such that $\n\mu_p\n\leq \b n$, we have $\lip\fii_m\geq \frac{\n\fii_m(0)-\fii_m(\mu_p)\n}{\n \mu_p\n}=\frac{\n\mu_m\n}{\n\mu_p\n}>\frac{K\b n}{\b n}=K$, which is not possible.

Let us denote $S=\bigcup_{t\in A}T_{\mu_m}^{t}$ the set of all chains from $\mu_m$ to points of $A$. Let $S=\{\mu_{k_1},...,\mu_{k_q}\}$ for some $k_1<k_2<...<k_q$, $q\in\N$. Note that $\mu_{k_1}=\mu_m$. For every chain $T=(t_1,...,t_l)$, $l\in\N$ define a trajectory of the chain $\Tr(T)$ as the union of the line segments $\bigcup_{i=1}^{l-1} [t_i,t_{i+1}]$. Denote $D=\bigcup_{t\in A}\Tr\left(T_{\mu_m}^{t}\right)$. Define now a function $F:[1,q]\times A\to D$ via 
$$F(t,x)=(t-i)\fii_{k_{i+1}}(x)+(1-(t-i))\fii_{k_{i}}(x), x\in A, t\in [i,i+1), i\in\{1,...,q-1\}$$
and $F(q,x)=x$, $x\in A$. We see that for each $t\in [1,q]$, the function $F(t,\cdot)$ is $K$-Lipschitz and that we have $F(1,x)=\mu_m$ for all $x\in A$.

Let $\e=3K\b n+\b$ and consider $B=\{B^{\mathrm{o}}_{2\b}(x)\cap S_{\e}\}_{x\in A}$ as an open cover of $S_{\e}$ and find a partition of unity $\{\psi_a\}_{a\in A}$ subordinated to the cover $B$. Define a function $R:[1,q]\times S_{\e}\to X$ by

$$R(t,x)=\sum_{a\in A}\psi_a(x)F(t,a),\ \ t\in [1,q],\ x\in S_{\e}.$$

We see, that $R(1,x)=\mu_m$ for all $x\in S_{\e}$ and that $$\sup_{x\in S_{\e}}|R(q,x)-x|\leq 2\b.$$ Of course, $R$ is continuous on $[1,q]\times S_{\e}$. Our goal is to prove there is a continuous deformation of  $S_{\e}$ into one point $\mu_m$ avoiding the origin, which is a contradiction. For that we define a straight-line homotopy between identity and $R(q,\cdot)$ by $W:[0,1]\times S_{\e}\to A(\e,2\b)$, $W(t,x)=tR(q,x)+(1-t)x$. Joining mappings $W$ and $R$ we get a mapping $Z:[0,q]\times S_{\e}\to X$ precisely defined by
$$Z(t,x)=\begin{cases}
W(t,x) & t\in[0,1), x\in S_{\e}\,,\\
R\left(\frac{q}{t},x\right) & t\in[1,q], x\in S_{\e}\,.\\
\end{cases}$$
All there is left to prove is that $R([1,q]\times S_{\e})\cap \{0\}=\emptyset$. To see that, note that the value $R(t,x)$ is a convex combination of values $F(t,a)$, where $a\in A$ are such that $d(x,a)<2\b$. Fix therefore $x\in S_{\e}$ and let $\mu_{l_1},\mu_{l_2},...,\mu_{l_p}\in A$ be such that $d(x,\mu_{l_i})<2\b$ for all $i$. From the preceding paragraphs it follows that the trajectory $\Tr(T_{\mu_m}^{\mu_{l_i}})$ of each chain from $\mu_m$ to $\mu_{l_i}$ has no intersection with $B_{\b \frac{n}{2}}$. Indeed, as the chain $T_{\mu_{m}}^{\mu_{l_i}}$ avoids the ball $B_{\b n}$ and the distance between two consecutive points in a chain is bounded by $2\b K$ and $n>2K$, we get the result. From the fact that $d(\mu_{l_j},\mu_{l_i})\leq 4\b$ for all $i,j$, we have that $\n F(t,\mu_{l_i})-F(t,\mu_{l_j})\n\leq 4K\b$ for all $t\in [1,q]$. But as $n>8K$ we get $R(t,x)\neq 0$ for any $t\in [0,1]$. Altogether we obtain $Z([0,q]\times S_{\e})\cap \{0\}=\emptyset$, which was to prove.
\end{proof}
One could ask in general what are the metric spaces $M$ such that $\F(M)$ has an unconditional Schauder basis. It is clear that if $M$ contains a line segment, then $\L_1$ is contained in $\F(M)$ and therefore $\F(M)$ cannot have an unconditional Schauder basis. The only interesting cases are then topologically discrete spaces $M$. Our guess is that if $\F(M)$ has an unconditional Schauder basis, it is isomorphic to $\ell_1$.
\\
\\\textbf{Open problem 1} \textit{Suppose $\F(M)$ has an unconditional Schauder basis. Is it isomorphic to $\ell_1$?}

In \cite{Gd}, one sees that $\F(M)$ is a complemented subspace of $L_1$ if and only if $M$ can be bi-Lipschitzly embedded into an $\R$-tree. A complemented subspace of $\L_1$ with unconditional basis is isomorphic to the space $\ell_1$ due to \cite{LiPe}. One can therefore restate the conjecture above into: Suppose $\F(M)$ has a Schauder basis $B$. If $M$ cannot be embedded into an $\R$-tree, is it true that $B$ is conditional?
\\
\\\textbf{Open problem 2} \textit{Is it true that for every uniformly discrete set $N\subseteq\R^2$ the space $\F(N)$ has a Schauder basis?}

It follows from Corollary \ref{no basis} the answer is no if we restrict ourselves only to retractional Schauder bases. However, we don't know if, supposed the answer is yes, we can find for every uniformly discrete set $N\subseteq\R^2$ an extensional Schauder basis on $\F(N)$.

\end{document}